\newtheorem{teor}{\bf Theorem}[section]
\newtheorem{lemma}[teor]{\bf Lemma}
\newtheorem{cor}[teor]{\bf Corollary}
\newtheorem{defn}{Definition}[section]
\newtheorem{rem}{Remark}[section]
\newcommand{\Rk}{{\mathbb R}^{k}}
\newcommand{\Rnk}{{\mathbb R}^{N-k}}
\newcommand{\Rn}{{\mathbb R}^{N}}
\newcommand{\fa} {\forall}
\newcommand{\pa} {\partial}
\newcommand{\al} {\alpha}
\newcommand{\ba} {\beta}
\newcommand{\de} {\delta}
\newcommand{\Ga} {\Gamma}
\newcommand{\Ome} {\Omega}
\newcommand{\la} {\lambda}
\newcommand{\no} {\nonumber}
\newcommand{\noi} {\noindent}
\newcommand{\na} {\nabla}
\newcommand{\va} {\varphi}
\newcommand{\var} {\varepsilon}
\newcommand{\spa}{\vspace{.2in}}
\newcommand{\df}{\stackrel{def}{=}}
\newcommand{\Onl}{\Omega^\nu_\lambda}
\newcommand{\Onm}{\Omega^\nu_\mu}
\newcommand{\lan}{\langle}
\newcommand{\ran}{\rangle}
\newcommand{\R}{\mathbb{R}}
\newcommand{\La}{\Lambda}
\newcommand{\Hunoz}{H^{1}_{0}(\Omega)}
\newcommand{\Wuno}{W^{1,p}_{0}(\Omega)}
\newcommand{\ps}{p^*(s)}
\newcommand{\pt}{p^*(t)}
\newcommand{\Iom}{\int_{\Omega}}
\newcommand{\Idom}{\int_{\partial \Omega}}
\newcommand{\un}{u_{n}}
\newcommand{\deb}{\rightharpoonup}
\newcommand{\unl}{u_\lambda^\nu}
\newcommand{\ynl}{y_\lambda^\nu}
\date{}
\begin{document}
\begin{center}
{{\textbf{\large  HARDY-SOBOLEV TYPE EQUATIONS\\ \ \\  FOR $p$-LAPLACIAN, $1<p<2$\\ \ \\ IN BOUNDED DOMAIN}}}
 \end{center}

\vspace{.2in}
\begin{center}
{M. Bhakta and  A. Biswas} 
\end{center}

\begin{center}
 {\small TIFR Centre for Applicable Mathematics, Post Bag No. 6503\\
 Sharadanagar,Chikkabommasandra, Bangalore 560 065.\\ E-mail:  {\it
 mousomi@math.tifrbng.res.in},
{\it anup@math.tifrbng.res.in}}
\end{center}

\begin{abstract}
We study quasilinear degenerate singular elliptic equation of type $-\Delta_p u = \frac{u^{p^*(s)-1}}{|y|^t}$
 in a smooth bounded domain $\Ome$ in $\Rn=\Rk\times\R^{N-k}$, $x=(y,z)\in\Rk\times\R^{N-k}$, $2 \leq k<N$ and $N\geq 3$, $1<p<2$, $0\leq s\leq p$,\ $0\leq t\leq s$ and $p^*(s)=\frac{p(n-s)}{n-p}$.
We study existence of solution for $t<s$, non-existence in a star-shaped domain for $t=s$  and $s<k\big(\frac{p-1}{p}\big)$. We also show that solution 
 is in $C^{1,\al}(\Ome)$ for  some $0<\al<1$ provided $t<\frac{k}{N}\big(\frac{p-1}{p}\big)$. The regularity of solution
can be improved to the class $W^{2,p}(\Ome)$ when $t<k(\frac{p-1}{p})$.
We also study some property of the singular sets in a cylindrically symmetric domain using the method of symmetry.
\end{abstract}

\noindent {\bf 2000 Mathematics Subject Classification}: 35J60, 35J70, 35J75, 35B09, 35B65.

\section{Introduction}
In this article we study degenerate quasilinear singular elliptic equation of the type

\begin{equation}\label{A}
\left.\begin{array}{rlllll}
 -\Delta_p u &=& \frac{u^{p^*(s)-1}}{|y|^t} \ & \mbox {in}& \Ome\\
u &\geq& 0 \ \ \ &\mbox{in}& \Ome\\
u &\in& \ W^{1,p}_0(\Ome),
\end{array}\right\} 
\end{equation}

where $\Delta_p$ denotes the $p$-laplacian operator, $\Delta_p u=\mbox{div}(|\na u|^{p-2}
\na u)$ and
 $\Ome$ is a smooth bounded domain in $\Rn=\Rk\times \Rnk$, $x=(y,z)\in\Rk\times\R^{N-k}$, 
 $2 \leq k<N$ and $N\geq 3$, $1<p<2$, $0\leq s<p$,\ $0\leq t\leq s$
 and $p^*(s)=\frac{p(N-s)}{N-p}$.

\spa

 By a non-trivial solution of (\ref{A}) we
mean $0\not\equiv u\in\Wuno$ satisfying
\begin{displaymath}
 \Iom |\na u|^{p-2}\na u\na v =\Iom \frac{u^{\ps-1}}{|y|^t}v\ \ \fa \ v\in\Wuno.
\end{displaymath}
Equivalently, $u$ is a critical point of the functional $I$ given by
\begin{displaymath}
 I(u)\df \frac{1}{p}\Iom |\na u|^p -\frac{1}{\ps}\Iom \frac{|u|^{\ps}}{|y|^t}, \ \ u\in\Wuno.
\end{displaymath}
$I$ is a well defined $C^1$ functional on $\Wuno$ for any open subset of $\Rn$ due to the 
following Hardy-Sobolev-Inequality:
\begin{displaymath}
 \int_{\Rn}\frac{|u|^{\ps}}{|y|^s}\leq C(\int_{\Rn}|\na u|^p)^{\frac{N-s}{N-p}}
\ \ \fa \ u\in C^\infty_0(\Rn),
\end{displaymath}
where $C$ is a constant depending on $s,\ p,\ N, k$ (see \cite{BT}, \cite{FMT}). Clearly, the limiting case $s=0$
corresponds to classical Sobolev inequality and for $s=p$ this inequality still holds
which is known as Hardy's Inequality. It can be easily checked that if $\Ome$ is a 
bounded domain in $\Rn$ then for $t\leq s$
\begin{eqnarray}
\Iom \frac{|u|^{\ps}}{|y|^t}\leq C(\Iom |\na u|^p)^{\frac{N-s}{N-p}}\ \ \fa \ u\in\Wuno
\label{HSI}.
\end{eqnarray}
When $\Ome=\Rn$, $t=s$, the existence of critical points of $I$ has been studied in \cite{BT}. In a limiting case $p=2$, existence and classification of solutions are
exclusively studied in \cite{MS}, \cite{FMS}. In this set up, uniqueness of solution in $D^{1,2}_{loc}(\Rn)$ has been studied in \cite{CL}. When $p=2$ and 
$\Ome$ bounded, (\ref{A}) with $t=s$ has been studied in \cite{BS}.
But in case of bounded domain we can not
say the solution exists in general because when $t=s$, (\ref{A}) turns out to be with
critical exponent. For this case we prove the non-existence result in Theorem 4.1 but
the difficulties here is that the solution is not regular enough to justify the 
calculations of usual Pohozaev type identity. We can get $u$ in $C^1(\bar\Ome\setminus
\{y=0\})$ due to the singularity of equation at $\{y=0\}$. Therefore we study $W^{2,p}$
regularity properties of solution in Section 3 and extend the same regularity up to
the boundary if $\pa\Ome$ is orthogonal to $\{y=0\}$ (see Section 3 for definition) (for $t<k(\frac{p-1}{p})$) which helps us to prove the non existence result
in star-shaped domain in section 4.

\spa

In section 5, we study symmetry properties of solution and its relation with the set $\{Du=0\}$.
A domain $\Ome$ is said to be cylindrically symmetric about $x_0=(y_0,z_0)\in\Ome$
if the following two conditions hold:
\begin{itemize}
 \item $\Ome$ is symmetric in $y\in\Rk$ about the point $(y_0,z)\in\Ome$ for any 
arbitrarily fixed $z$.
\item $\Ome$ is symmetric in $z\in\Rnk$ about the point $(y,z_0)\in\Ome$ for any arbitrarily fixed $y$.
\end{itemize}

 We say a function $u$ is 
symmetric in variable y if there exists $y_0\in\Rk$ such that 
\begin{itemize}
 \item for any choice of $z\in\Rnk$, $u(\cdot,z)$ is symmetric non-increasing about $(y_0,z)$
in $\Rk$
\end{itemize}
and symmetric in variable z if there exists $z_0\in\Rnk$ such that 
\begin{itemize}
\item for any choice of $y\in\Rk$, $u(y,\cdot)$ is symmetric non-increasing about $(y,z_0)$
in $\Rnk$.
\end{itemize}
We say $u$ is cylindrically symmetric about $(y_0,z_0)$ if the two conditions above hold. 
In  the case $p=2$, several results have been
obtained starting with the famous paper \cite{GNN} by Gidas, Ni and Nirenbarg. Let $\Ome$ be a bounded domain in $\Rn$, $N\geq 2$, which is convex and symmetric in the $x_1$ direction and consider the problem
\begin{equation}\label{B}
\left.\begin{array}{rlllll}
 -\Delta_p u &=& f(u) \ & \mbox {in}& \Ome\\
u &>& 0 \ \ \ &\mbox{in}& \Ome\\
u &=& 0 \  \ \ &\mbox{on}& \pa\Ome,
\end{array}\right\} 
\end{equation}
In this paper they used moving plane method to prove (among other results) that if $p=2$, every classical solution to (\ref{B}) is symmetric with respect to the hyperplane $T_0=\{x=(x_1,x^\prime)\in\Rn : x_1=0\}$ and strictly increasing in $x_1$ for $x_1<0$, provided $\Ome$ is smooth and $f$ is Lipschitz continuous.As a corollary if $\Ome$ is a ball, $s=0=t$, then $u$ is radially symmetric and strictly radially decreasing.
One of the several reason that the paper had a big impact was it brought to attention the moving plane method which since then has been largely used in many different problems. This
method was essentially based on maximum principle which was first used by Serrin
\cite{S}. For $p=2$ and $\Ome=\Rn$, cylindrical symmetry of solution of (\ref{A})
was shown in \cite{FMS} (see also \cite{GM}). The difficulties in extending the result for the case $p\neq 2$
is to overcome the hurdle of extending the properties of solutions of strictly elliptic equation to solutions of $p$-laplacian equation. In 
particular, comparison principles used for strictly elliptic operators are not
available for degenerate operator considered. 

\spa

A first step towards extending the moving plane method to the solutions of problem involving p-laplacian operator has been done in \cite{LD}. Later for $1<p<2$, $N\geq 2$, if $u\in\Wuno\cap C^1(\bar\Ome)$ solves (\ref{B}) where $f$ is lipschitz continuous, symmetry of solution using moving plane method is done in \cite{DP}.

\spa

But none of those methods are applicable for our set up as we have both degeneracy and partial singularity in $f$. The degeneracy does not allow us to use the
maximum principle for strictly elliptic operators and singularity in $\{y=0\}$ does not allow us to use the moving plane method as in \cite{DP}
(Without much effort one can apply the method in \cite{DP} if the partial singularity $\{y=0\}$ is replaced by singularity at $\{x=0\}$). 
As we mentioned earlier, the key ingredient in the method of moving plane is weak comparison principle. In the presence of
one point singularity $\{x=0\}$ one can modify the weak comparison principle in \cite{DP} to get a suitable comparison principle but
in the presence of partial singularity it seems difficult to get a modified version. However we can get a weaker comparison principle (see Theorem
\ref{WCP}). Therefore getting the cylindrical symmetry of $u$ satisfying (\ref{A}) in cylindrically symmetric domain is quite challenging. 
However, one can study some properties of the set of degeneracy $Z=\{x\in\Ome : Du=0\}$ in a cylindrically symmetric domain.  
Connection of $Z$ with the symmetry has also been studied in \cite{LD}. Let
$\mathcal{H}^k$ denote the $k$-dimensional Hausdroff measure. Then it is easy to observe that
\begin{displaymath}
u\ \mbox{is symmetric in variable}\ y\Rightarrow \mbox{either}\ Z\subset\{y=0\}\  \mbox{or}\ \mathcal{H}^{k-1}(Z)>0.
\end{displaymath}
In section 5, we show the following is true
\begin{displaymath}
\mathcal{H}^{k-1}(Z)=0\Rightarrow u\ \mbox{is symmetric in variable}\ y\ \mbox{and}\ Z\subset\{y=0\}.
\end{displaymath}
In other words, we show that in a cylindrically symmetric domain the set of degeneracy $Z$ of $u$ satisfying (\ref{A}) can not be "small" (in measure
theoretic sense) unless $Z=\{0\}$.
\spa

\section{ Existence of non trivial solutions in subcritical case}

\begin{teor}
 There exists a non trivial solution of the equation (\ref{A}) when $t<s$. Moreover if
$t<\frac{kp}{N}$ and $\Ome$ is connected then the non trivial solution is strictly positive in $\Ome$.
\end{teor}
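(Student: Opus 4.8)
The plan is to produce the solution variationally, as a constrained minimizer, the whole argument resting on a compactness statement that is exactly where the hypothesis $t<s$ enters. I set
\[
S\df\inf\Big\{\Iom|\na u|^p\ :\ u\in\Wuno,\ \Iom\frac{|u|^{\ps}}{|y|^t}=1\Big\}.
\]
Since $s<p$ forces $\ps>p$, the constraint has higher homogeneity than the energy, so any minimizer solves, after the rescaling $u\mapsto S^{1/(\ps-p)}u$, the equation (\ref{A}); moreover $\Iom|\na|u||^p=\Iom|\na u|^p$ and the constraint are invariant under $u\mapsto|u|$, so the minimizer may be taken $\geq0$.

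The crux of existence is the compactness of the embedding $\Wuno\hookrightarrow L^{\ps}(\Ome,|y|^{-t}dx)$ for $t<s$. I would first observe that $t<s$ is equivalent to $\ps<\pt$, where $\pt=\frac{p(N-t)}{N-p}$ is the critical Hardy-Sobolev exponent attached to the weight $|y|^{-t}$; thus the nonlinearity is strictly subcritical for that weight. Given a bounded sequence $u_n\deb u$ in $\Wuno$, split $\Ome$ along the tube $\Ome_\de=\{|y|<\de\}\cap\Ome$. On $\Ome\setminus\Ome_\de$ the weight is bounded and $\ps<p^*$, so Rellich-Kondrachov yields $u_n\to u$ in $L^{\ps}(\Ome\setminus\Ome_\de,|y|^{-t})$. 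On $\Ome_\de$, H\"older's inequality with $\theta=\ps/\pt=\frac{N-s}{N-t}\in(0,1)$ gives
\[
\int_{\Ome_\de}\frac{|u_n-u|^{\ps}}{|y|^t}\leq\Big(\int_{\Ome_\de}\frac{|u_n-u|^{\pt}}{|y|^t}\Big)^{\theta}\Big(\int_{\Ome_\de}|y|^{-t}\Big)^{1-\theta},
\]
whose first factor is bounded uniformly in $n$ by the Hardy-Sobolev inequality for the weight $|y|^{-t}$ (the critical exponent there being $\pt$), while the second tends to $0$ as $\de\to0$ because $t\le s<p<2\le k$ makes $\int_{|y|<\de}|y|^{-t}\sim\de^{k-t}\to0$. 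Sending $\de\to0$ after $n\to\infty$ establishes the compactness.

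With this compactness the minimization is standard: a minimizing sequence is bounded in $\Wuno$, a weakly convergent subsequence passes the constraint to the limit by the strong convergence just proved, and weak lower semicontinuity of $u\mapsto\Iom|\na u|^p$ shows that $S>0$ is attained at some $u\ge0$ with constraint $1$, hence $u\not\equiv0$. The Lagrange multiplier rule then produces $-\De_p u=S\,|y|^{-t}u^{\ps-1}$ weakly, and the rescaling of the first paragraph delivers a nontrivial nonnegative solution of (\ref{A}).

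The strict positivity is where I expect the real difficulty, caused by the partial singularity $\{y=0\}$. I would rewrite the equation as $-\De_p u=V(x)u^{p-1}$ with $V=|y|^{-t}u^{\ps-p}$ and $\ps>p$. A Moser/Brezis-Kato iteration, made to converge by the subcriticality $\ps<\pt$, first gives $u\in L^\infty_{loc}(\Ome)$, whence $V\le C|y|^{-t}$ locally. Now the hypothesis $t<\frac{kp}{N}$ is precisely $\frac{N}{p}<\frac{k}{t}$, so one may pick $q$ with $\frac{N}{p}<q<\frac{k}{t}$; since $|y|^{-t}\in L^q_{loc}$ exactly when $q<k/t$, the potential $V$ sits in $L^q_{loc}$ with $q>N/p$, which is the integrability threshold in Trudinger's Harnack inequality for the $p$-Laplacian. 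That inequality, valid on balls centred anywhere in $\Ome$ including on $\{y=0\}$, together with connectedness of $\Ome$ and $u\not\equiv0$, yields $u>0$ in $\Ome$. (Away from $\{y=0\}$ one may alternatively invoke the $C^{1,\al}_{loc}$ regularity of DiBenedetto-Tolksdorf and V\'azquez's strong maximum principle on the connected set $\Ome\setminus\{y=0\}$, connected since $k\ge2$.) The genuine obstacle is thus not existence but pushing the maximum-principle machinery across the singular set: establishing the local $L^\infty$ bound near $\{y=0\}$ and checking that $t<\frac{kp}{N}$ indeed places $V$ in the admissible class for the Harnack inequality.
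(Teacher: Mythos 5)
Your proposal is correct, but it takes a genuinely different route from the paper's for the existence part. The paper applies the Mountain Pass Theorem to the free functional $I$ and verifies the Palais--Smale condition: boundedness of PS sequences, a.e.\ convergence of gradients via Boccardo--Murat, the vector inequality $\langle|a|^{p-2}a-|b|^{p-2}b,a-b\rangle\ge C(|a|+|b|)^{p-2}|a-b|^2$, and---this is exactly where $t<s$ enters---compactness of the map $u\mapsto u/|y|^{t/q}$ from $\Wuno$ into $L^q(\Ome)$ for $q<\pt$ (which the paper asserts rather than proves), finishing with the $p<2$ trick to upgrade the weighted $H^1$-type estimate to convergence in $\Wuno$. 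You instead minimize $\Iom|\na u|^p$ on the constraint $\Iom|u|^{\ps}|y|^{-t}=1$ and prove the needed compact embedding $\Wuno\hookrightarrow L^{\ps}(\Ome,|y|^{-t}dx)$ by hand: splitting along the tube $\{|y|<\de\}$, H\"older with $\theta=\ps/\pt<1$ (here $t<s$ is used), the Hardy--Sobolev inequality with weight exponent $t$, and $\int_{|y|<\de}|y|^{-t}\lesssim\de^{k-t}$; your computation is complete and correct, including the implicit points that $t<s$ forces $s>0$, so $\ps<p^*$ and Rellich--Kondrachov applies off the tube, and that homogeneity of the nonlinearity makes the Lagrange multiplier/rescaling step legitimate. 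Your route is more elementary (no PS analysis, no a.e.\ gradient convergence), yields a least-energy-type solution, and makes explicit a compactness statement the paper leaves implicit; the paper's mountain-pass machinery, by contrast, does not rely on homogeneity and is the template that survives in settings where constrained minimization fails. For positivity, both arguments hinge on the identical arithmetic $t<\frac{kp}{N}\iff\frac{N}{p}<\frac{k}{t}$, which places $V=u^{\ps-p}|y|^{-t}$ in $L^q$, $q>N/p$, once an $L^\infty$ bound is known (the paper takes it from Vassilev's theorem, you from a Moser/Brezis--Kato iteration---both are black boxes at the same level of detail); you then conclude via the Serrin--Trudinger Harnack inequality on balls meeting $\{y=0\}$, while the paper concludes via H\"older continuity (Pucci--Serrin) plus Damascelli's strong maximum principle. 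These endgames are interchangeable here, and your remark that the V\'azquez-type argument alone only gives positivity on $\Ome\setminus\{y=0\}$ correctly isolates why one must push a Harnack-type estimate across the singular set, which is precisely what the hypothesis $t<\frac{kp}{N}$ is for.
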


\begin{proof}
Let us define,
\begin{displaymath}
 I:\Wuno\to\R \  \  \ \mbox{by}
\end{displaymath}
\begin{displaymath}{\normalsize {\large {\Large }}}
 I(u)\df \frac{1}{p}\Iom |\na u|^p-\frac{1}{\ps}\Iom\frac{|u|^{\ps}}{|y|^t}
\end{displaymath}
First we check that $I$ satisfy all the conditions of Mountain Pass Theorem (see \cite{AR}). $I$ is well defined $C^1$ functional on $\Wuno$ due to (\ref{HSI}).\\
Clearly (i) $I(0)=0$ is satisfied.\\[1mm]
(ii) Let us take $u\in\Wuno$ s.t. $\|u\|_{\Wuno}=r$, where $r$ will be chosen later.
 Then
\begin{displaymath}
 \Iom\frac{|u|^{\ps}}{|y|^t}\leq\int_{\{|y|\leq
 1\}\cap\Ome}\frac{|u|^{\ps}}{|y|^s}+\int_{\{|y|>1\}\cap\Ome}|u|^{\ps}\leq C \|u\|^{\ps}_{\Wuno}
\end{displaymath}
Therefore $$ I(u)\geq\frac{r^p}{p}-C\frac{r^{\ps}}{\ps}.$$
Since $p<p^*(s)$, we have $I(u)>0$ for $r>0$ small enough.\\[1mm]
(iii) Let us fix $u\in\Wuno$ s.t $u\not\equiv 0$. Now for $\bar t>0$
\begin{displaymath}
 I(\bar tu)=\frac{\bar t^p}{p}\Iom|\na u|^p-\frac{\bar
 t^{\ps}}{\ps}\Iom\frac{|u|^{\ps}}{|y|^t}
\end{displaymath}
So, for $\bar t$ to be large enough we have $I(\bar tu)<0$.\\
(iv) Suppose $\{\un\}\subset\Wuno$ s.t. $\un\geq 0$ and
\begin{displaymath}
 \sup_{n} I(\un)\leq C,\ \  I^\prime(\un)\to 0\  \  \mbox{in}\ \  W^{-1,p^\prime}(\Ome).
\end{displaymath}
Claim: $\{\un\}$ has a convergent subsequence in $\Wuno$.\\
To prove the claim first we note that $\un$ is bounded in $\Wuno$ (by standard argument). More preciously, since $\langle I^\prime(\un),\un\rangle=o(1)\|\un\|$, computing $I(\un)-\frac{1}{\ps}\langle I^\prime(\un),\un\rangle$, we get $\|\un\|^p_{\Wuno}\leq M+o(1)\|\un\|_{\Wuno}$ and hence boundedness follows.  Therefore passing to a subsequence we may assume that $\un\deb u$ in $\Wuno$, $\un\to u$ in $L^q(\Ome)$ for $q<p^*$.\\[1mm]
For $v\in C^\infty_0(\Ome)$ we have
\begin{equation}\label{a}
o(1)=\langle I^\prime(\un),v\rangle=\Iom |\na\un|^{p-2}\na\un\na
 v-\Iom\frac{|\un|^{\ps-2}\un v}{|y|^t}
\end{equation}
where $o(1)\to 0$ in $W^{-1, p^\prime}(\Ome)$ as $n\to\infty$. Using Vitali's Theorem we have 
\begin{equation*}
\frac{|\un|^{\ps-2}\un}{|y|^t}\to \frac{|u|^{\ps-2}u}{|y|^t} \   \  \mbox{in}\ \  L^1(\Ome)
\end{equation*}
and therefore from Boccardo-Murat (see \cite{BM}) we get
\begin{equation*}
\na\un\to\na u \ \ \mbox{a.e. in} \ \ \Ome.
\end{equation*}
Hence RHS of \ref{a} converges to $\Iom |\na u|^{p-2}\na u\na v-\Iom\frac{|u|^{\ps-2}uv}{|y|^t}=\langle I^\prime(u),v\rangle$.
Therefore we have $I^\prime(u)=0$.\\
Now
\begin{displaymath}
 o(1)=\langle I^\prime(\un)-I^\prime(u),\un-u\rangle
\end{displaymath}
\begin{displaymath}
 =\Iom(|\na\un|^{p-2}\na\un-|\na u|^{p-2}\na
 u)\na(\un-u)-\Iom\frac{|\un|^{\ps-2}\un-|u|^{\ps-2}u}{|y|^t}(\un-u)
\end{displaymath}
Therefore using the inequality $\langle |a|^{p-2}a-|b|^{p-2}b, a-b\rangle \geq C(|a|+|b|)^{p-2}|a-b|^2$ for $a, b \in \Rn$, we have 
\begin{eqnarray}
 \Iom(|\na\un|+|\na u|)^{p-2}|\na(\un-u)|^2\leq
 C\Iom\frac{|\un|^{\ps-2}\un-|u|^{\ps-2}u}{|y|^t}(\un-u)+ o(1).\label{eqn}
\end{eqnarray}
We already know that $\un\to u$ a.e. in $\Ome$ and
\begin{eqnarray*}
 \int_{\Ome_1}\frac{|\un|^{p^*(s)-1}|u|}{|y|^t} &\leq&
\Big(\int_{\Ome_1}\frac{|\un|^{p^*(s)}}{|y|^t}\Big)^\frac{p^*(s)-1}{p^*(s)}
\Big(\int_{\Ome_1}\frac{|u|^{\ps}}{|y|^t}\Big)^\frac{1}{\ps}
\\
\int_{\Ome_1}\frac{|u|^{\ps-1}|\un|}{|y|^t} &\leq &
\Big(\int_{\Ome_1}\frac{|u|^{p^*(s)}}{|y|^t}\Big)^\frac{p^*(s)-1}{p^*(s)}
\Big(\int_{\Ome_1}\frac{|\un|^{\ps}}{|y|^t}\Big)^\frac{1}{\ps},
\end{eqnarray*}
for any $\Ome_1\subset \Ome$. Therefore using (\ref{HSI}) and Vitali's Theorem  we have
\begin{eqnarray*}
 \int_{\Ome}\frac{|\un|^{p^*(s)-2}\un u}{|y|^t} &\to& \int_{\Ome}\frac{|u|^{\ps}}{|y|^t}
\\
\int_{\Ome}\frac{|u|^{p^*(s)-2}u \un}{|y|^t} &\to& \int_{\Ome}\frac{|u|^{\ps}}{|y|^t}.
\end{eqnarray*}
Again from (H-S) inequality we have
\begin{displaymath}
 \Big(\int_{\Ome}\frac{|u|^q}{|y|^t}\Big)^\frac{1}{q}\leq \Big(\int_\Ome|\na
u|^p\Big)^\frac{1}{p} \ \ \forall \ q\leq p^*(t)\ \mbox{and}\ u\in\Wuno.
\end{displaymath}
Therefore the map $f:\Wuno\to L^q(\Ome)$ defined by
\begin{displaymath}
 f(u)\df \frac{u}{|y|^\frac{t}{q}}
\end{displaymath}
is continuous for $q\leq p^*(t)$ and compact for $q<p^*(t)$. Since $t<s$, we have $p^*(t)
>\ps$ and hence
\begin{displaymath}
 \int_\Ome\frac{|\un|^{\ps}}{|y|^t}\to \int_\Ome\frac{|u|^{\ps}}{|y|^t}.
\end{displaymath}
Therefore from (\ref{eqn})
we have $\un\to u$ in $\Hunoz$. Since $p<2$, we have $\un\to u$ in $\Wuno$.\\
We already have, $\displaystyle\inf_{\|u\|=r}I(u)=b$ for some $b>0$. Now applying Mountain Pass Theorem  we get $\ba$ is a critical value of $I$, where
\begin{displaymath}
 \ba=\inf_{g\in\Ga}\max_{0\leq t\leq 1}I(g(t))\geq b>0,
\end{displaymath}
which implies $\ba>0$ and $\exists\ u\in\Wuno$ s.t. $I(u)=\ba, I^\prime(u)=0$. Now $I(u)>0$ implies $u\not\equiv 0$. Since $I(u)=I(|u|)$, we can get $|u|$ as a critical point of I as well and this proves existence of non trivial solution to (\ref{A}). In fact, $u\in L^{\infty}(\Ome)$ as we would prove in the next section. If $t<\frac{kp}{N}$, then the solution $u$
of (\ref{A}) is H\"older continuous, using Theorem $7.3.1$ in \cite{SP}. Since $\Ome$ is connected, using the
strong maximum principle (Theorem $2.2$ in \cite{LD}) we have the non-trivial 
solution $u>0$ in $\Ome$.
\end{proof}

\spa

\section{Regularity of solution} 

We have already seen existence of non-trivial solution in case of $t<s$, but when $t=s$
solution of (\ref{A}) may not exist in all bounded domain in general as we will see in the next section. But whenever the solution exists we can have interior regularity as well.
To show $C^{1,\al}_{loc}$ regularity we'll use the following two theorems:\\

\begin{teor}\label{Vas}
Let $\Ome$ be an open subset of $\Rn$, $1<p<N$, $0\leq s\leq p$, $s<k$ and
$s(N-k)<k(N-p)$. Let $u\in D^{1,p}(\Ome)$ be a non-negative weak solution of
the inequality
\begin{displaymath}
 -div(|\na u|^{p-2}\na u)\leq V\frac{|u|^{p-2}u}{|y|^s}\ \ \mbox{in}\ \Ome.
\end{displaymath}
\item[(a)] If $V\in L^{r^\prime}(\Ome)$, $r^\prime=\frac{p^*}{\ps-p}$, then $u\in 
L^q(\frac{dx}{|y|^t})$ for any $0\leq t<\min\{p,s\}$ and $q\geq\ps$. In particular
$u\in L^q(\Ome)$ for every $p^*\leq q<\infty$.
\item[(b)] If $V\in L^{t_0}(\Ome)\cap L^{r^\prime}(\Ome)$ for some $t_0>r^\prime$,
then $u\in L^\infty(\Ome)$. 
\end{teor}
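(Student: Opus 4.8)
The plan is to run a Brezis--Kato/Moser iteration adapted to the partially singular weight $|y|^{-s}$. Write $u_L=\min\{u,L\}$ for $L>0$ and, for $\gamma\ge 1$, test the differential inequality against $\varphi=u\,u_L^{p(\gamma-1)}$; this is an admissible nonnegative function in $\Wuno$ because the truncation $u_L$ is bounded and $u\in D^{1,p}$. Setting $w_L=u\,u_L^{\gamma-1}$, a direct computation of $\na\varphi$ on $\{u<L\}$ and $\{u\ge L\}$ shows that the left-hand side dominates $c(\gamma)\Iom|\na w_L|^p$, while the right-hand side equals $\Iom V\,w_L^p/|y|^s$. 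Thus the whole iteration rests on the basic estimate
\[
\Iom|\na w_L|^p\le C(\gamma)\Iom V\,\frac{w_L^p}{|y|^s}.
\]

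The crucial step is to control the right-hand side, which sits at the \emph{critical} Hardy--Sobolev exponent and therefore offers no integrability gain for free. Split $V=V\mathbf{1}_{\{V\le R\}}+V\mathbf{1}_{\{V>R\}}$. The bounded part contributes the lower-order term $R\Iom w_L^p/|y|^s$. For the unbounded part I apply H\"older with the exponent $r'=\frac{p^*}{\ps-p}=\frac{N}{p-s}$ and its conjugate $r=\frac{N}{N-p+s}$, obtaining
\[
\int_{\{V>R\}}V\,\frac{w_L^p}{|y|^s}\le\Big(\int_{\{V>R\}}V^{r'}\Big)^{1/r'}\Big(\Iom w_L^{pr}\,|y|^{-sr}\Big)^{1/r}.
\]
The value of $r$ is chosen precisely so that $pr=p^*(sr)$ with weight exponent $a=sr$; the second factor is then exactly the critical Hardy--Sobolev quantity for the weight $a$, hence bounded by $C\Iom|\na w_L|^p$. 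This use of Hardy--Sobolev is legitimate only when $a=sr<k$, and $\frac{sN}{N-p+s}<k$ is \emph{equivalent} to the hypothesis $s(N-k)<k(N-p)$; likewise $a\le p$ reduces to $s\le p$. Since $V\in L^{r'}(\Omega)$, absolute continuity of the integral gives $\big(\int_{\{V>R\}}V^{r'}\big)^{1/r'}\to 0$ as $R\to\infty$, so for $R$ large the gradient term is absorbed into the left side, leaving $\Iom|\na w_L|^p\le C(\gamma,R)\Iom w_L^p/|y|^s$.

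With this self-improving estimate the iteration closes. Letting $L\to\infty$ by monotone convergence gives, for $s<p$, the implication $u\in L^{\gamma p}(dx/|y|^s)\Rightarrow u^\gamma\in D^{1,p}\subset L^{\gamma p^*(s)}(dx/|y|^s)$, i.e. the exponent is multiplied by $p^*(s)/p>1$ at each stage; starting from $u\in L^{p^*(s)}(dx/|y|^s)$ (Hardy--Sobolev) and iterating reaches every $q\ge p^*(s)$, and since $|y|^{s-t}$ is bounded on the bounded domain $\Omega$ for $t<s$ one gets $u\in L^q(dx/|y|^t)$ for $0\le t<\min\{p,s\}$, with $t=0$ yielding the stated $L^q(\Omega)$ conclusion. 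For part (b) the stronger hypothesis $V\in L^{t_0}$ with $t_0>r'$ turns this critical balance into a \emph{subcritical} one: H\"older with $t_0$ and its conjugate $t_0'$ produces the exponent $pt_0'$, and $t_0>r'$ forces $t_0'<r$, whence $pt_0'<p^*(st_0')$ strictly. This gives a fixed multiplicative gain $\chi>1$ in integrability at every step, so a genuine Moser iteration with $q_n=\chi^{\,n}q_0$ applies; the constants $C(\gamma_n)$ grow only polynomially in $\gamma_n$, so $\prod_n C(\gamma_n)^{1/q_n}$ converges and one concludes $\|u\|_{L^\infty(\Omega)}\le C\|u\|_{L^{p^*}(\Omega)}$.

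I expect the main obstacle to be the critical term in part (a): there is no compactness or automatic gain at the Hardy--Sobolev critical exponent, so everything hinges on the Brezis--Kato truncation of $V$ combined with the exact matching of the H\"older exponent $r$ to the weighted Hardy--Sobolev inequality, and on verifying that the resulting weight $sr$ stays below $k$ — which is precisely where the structural hypothesis $s(N-k)<k(N-p)$ enters. Secondary technical care is needed to justify the test function and the passage $L\to\infty$, and, in the borderline case $s=p$ where $p^*(s)=p$ yields no gain, to run the iteration instead with the smaller absorption weight $a=sr<s$.
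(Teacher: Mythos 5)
First, a point of comparison: the paper does not prove this theorem at all --- it is quoted with the remark ``For the proof we refer the reader to Theorem 2.1 in \cite{DV}'' --- so your attempt can only be measured against the method of that reference, which is indeed a truncation (Brezis--Kato/Moser) iteration of exactly the kind you set up. Your core mechanism is correct and is the heart of the matter: testing with $u\,u_L^{p(\gamma-1)}$ gives $c(\gamma)\int|\nabla w_L|^p\le\int V w_L^p|y|^{-s}$ with $w_L=u\,u_L^{\gamma-1}$; splitting $V$ at height $R$ and applying H\"older with $r'=\frac{p^*}{p^*(s)-p}=\frac{N}{p-s}$, $r=\frac{N}{N-p+s}$, the identity $pr=p^*(sr)$ together with the equivalences $sr<k\Leftrightarrow s(N-k)<k(N-p)$ and $sr\le p\Leftrightarrow s\le p$ lets the critical term be absorbed for large $R$; and in part (b) the chain $t_0>r'\Rightarrow t_0'<r\Rightarrow pt_0'<p^*(st_0')$ does convert the problem into a genuinely subcritical Moser iteration with summable constants.

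That said, there are two genuine flaws. (1) Your closing remark rests on the false inequality $sr<s$: since $r=\frac{N}{N-p+s}\ge 1$ exactly when $s\le p$, one has $sr\ge s$, with equality only for $s=0$ or $s=p$. At $s=p$ the scheme collapses rather than being rescued by a ``smaller weight'': there $r'=\infty$, $r=1$ and $p^*(s)=p$, so truncating a bounded $V$ produces no small parameter and Hardy's inequality yields no exponent gain; moreover functions behaving like $|y|^{-\beta}$ with $(\beta+1)p<k$ satisfy the hypotheses with constant $V$ yet lie in no high $L^q$, so no repair can exist at $s=p$ and the statement must be read with $s<p$ (equivalently $r'<\infty$); the proposed fix should simply be deleted. (2) In part (a) you deduce the $t$-weighted conclusion by writing $|y|^{-t}=|y|^{s-t}\,|y|^{-s}$ and invoking boundedness of $\Omega$, which is not a hypothesis (the theorem is stated for an arbitrary open set), and on $\{|y|>1\}$ in an unbounded domain this comparison goes the wrong way. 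The repair should be internal to your own iteration: after absorbing the critical term, apply the Hardy--Sobolev inequality with weight $t$ (admissible since $t<\min\{p,s\}\le p$ and $t<s<k$), or the plain Sobolev inequality when $t=0$, directly to $w_L$ on the left-hand side; this yields $u\in L^{\gamma p^*(t)}(dx/|y|^t)$ at every stage with no boundedness assumption, and only the low range $p^*(s)\le q<p^*(t)$ then needs a separate finite-measure or interpolation argument --- automatic in the bounded domains where the paper actually applies the result. The admissibility issues you flag (non-compactly-supported test functions, applying Hardy--Sobolev to $w_L$ on $\Omega$) are resolved by the $D^{1,p}$ density framework of \cite{DV} and are indeed only technical.
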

For the proof we refer the reader to Theorem 2.1 in \cite{DV}.

\spa

\begin{teor}\label{ED}
Let $u\in W^{1,p}_{loc}(\Ome)$ be a weak solution of
\begin{displaymath}
 -\mbox{div}(|\na u|^{p-2}\na u)=\varphi,\ p>1; \ \varphi\in L^q_{loc}(\Ome)\
\mbox{for some}\ q
>\frac{Np}{p-1}
\end{displaymath}
then $u\in C^{1,\al}_{loc}(\Ome)$.
\end{teor}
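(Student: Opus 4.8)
The plan is to localize and then combine local boundedness of $u$ with the $C^{1,\al}$ regularity theory for homogeneous $p$-harmonic functions through a comparison (Campanato-type) argument. Fix a ball $B_{2R}\subset\subset\Ome$ and work throughout on concentric subballs, so that the local integrability of $\varphi$ becomes a genuine $L^q$ bound.

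First I would establish $u\in L^\infty_{loc}(\Ome)$. Since $q>\frac{Np}{p-1}>N$, the forcing $\varphi$ lies well above the threshold needed for the local boundedness theory, so a De Giorgi--Moser iteration on the truncations $(u-j)_+$ (using Caccioppoli estimates obtained by testing the weak formulation against $\eta^p(u-j)_+$) yields an $L^\infty$ bound on $B_R$ in terms of $\|u\|_{W^{1,p}(B_{2R})}$ and $\|\varphi\|_{L^q(B_{2R})}$.

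The heart of the argument is a comparison with $p$-harmonic functions. Let $v$ solve $-\mbox{div}(|\na v|^{p-2}\na v)=0$ in $B_\rho$ with $v=u$ on $\pa B_\rho$. By the regularity theory for $p$-harmonic functions, $v\in C^{1,\al}(B_{\rho/2})$ with the Campanato-type decay
\begin{equation*}
\int_{B_\sigma}|\na v-(\na v)_{B_\sigma}|^p\,dx\leq C\Big(\frac{\sigma}{\rho}\Big)^{N+\al p}\int_{B_\rho}|\na v|^p\,dx,\qquad \sigma\le\rho.
\end{equation*}
To estimate $u-v$ I would test the difference of the two weak formulations with $u-v\in W^{1,p}_0(B_\rho)$ and invoke the monotonicity inequality $\langle|a|^{p-2}a-|b|^{p-2}b,a-b\rangle\ge C(|a|+|b|)^{p-2}|a-b|^2$ already recorded in Section 2. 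Pairing the right-hand side $\int_{B_\rho}\varphi(u-v)$ with a Poincar\'e--Sobolev estimate and the integrability $q>\frac{Np}{p-1}$ controls $\int_{B_\rho}|\na u-\na v|^p$ by $\|\varphi\|_{L^q(B_\rho)}$ times a strictly positive power of $\rho$. Adding the two estimates and iterating over dyadic radii produces a decay $\int_{B_\sigma}|\na u-(\na u)_{B_\sigma}|^p\le C\sigma^{N+\beta p}$ for some $\beta\in(0,1)$, i.e. $\na u$ lies in a Campanato space embedding into $C^{0,\beta}_{loc}$, whence $u\in C^{1,\beta}_{loc}(\Ome)$.

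The main obstacle is the lack of uniform ellipticity of $\Delta_p$ on the set $\{\na u=0\}$: away from it the linearized equation is uniformly elliptic and standard Schauder theory applies, but near it the monotonicity inequality is singular for $1<p<2$, since the weight $(|a|+|b|)^{p-2}$ blows up as the gradients vanish. Converting the $L^2$-type control coming from monotonicity into genuine $L^p$ control of $\na u-\na v$ therefore requires the H\"older splitting $|\na u-\na v|^p=\big[(|\na u|+|\na v|)^{p-2}|\na u-\na v|^2\big]^{p/2}(|\na u|+|\na v|)^{(2-p)p/2}$ followed by H\"older with exponents $2/p$ and $2/(2-p)$, which reintroduces a factor of the local energy. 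This degeneracy is precisely what forces one through the intrinsic regularity theory of $p$-harmonic functions rather than a direct freezing-of-coefficients argument, and verifying that the threshold $q>\frac{Np}{p-1}$ yields a strictly positive decay exponent $\beta$ is the delicate bookkeeping step.
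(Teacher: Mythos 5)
Your proposal is sound in outline, but note first that the paper itself contains no proof of this statement: it is quoted as a known result and the proof is deferred wholesale to DiBenedetto \cite{ED}. So the real comparison is with DiBenedetto's argument, which is \emph{intrinsic}: one regularizes the operator, differentiates the equation, proves local boundedness of $\na u$ by a De Giorgi--Moser iteration at the gradient level, and then obtains oscillation decay of $\na u$ through an alternative (degenerate versus non-degenerate regime), handling the homogeneous and inhomogeneous cases simultaneously. You instead argue by \emph{perturbation}: you take the $C^{1,\al}$ theory for $p$-harmonic functions (equivalently the Campanato decay you display for $\na v$) as a black box and transfer it to $u$ by comparison estimates and a Campanato iteration. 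This is a legitimate and by now standard organization of the result, and it makes transparent that any $q>N$ suffices, so the hypothesis $q>\frac{Np}{p-1}>N$ leaves ample room. What your route buys is modularity and brevity; what it costs is self-containedness, since the homogeneous regularity you invoke is precisely the hard core of \cite{ED} (the case $\varphi\equiv 0$), so nothing is gained at the level of first principles. One substantive caution on your degenerate-case bookkeeping: after the H\"older splitting, the comparison estimate reads
\begin{displaymath}
\int_{B_\rho}|\na u-\na v|^p\;\leq\;C\,\|\varphi\|^p_{L^q(B_\rho)}\,\rho^{\gamma p}\Big(\int_{B_\rho}(|\na u|+|\na v|)^p\Big)^{2-p},\qquad \gamma=N+1-\tfrac{N}{q}-\tfrac{N}{p},
\end{displaymath}
and the energy factor cannot simply be bounded by a constant: for general $1<p<2$ that would destroy the exponent count. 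You must first run the same comparison scheme once to get a preliminary decay $\int_{B_\rho}|\na u|^p\leq C\rho^{N-\eps}$ (this is where $q>N$ enters, via $\gamma p>N(p-1)$), and only then does a second pass give $\int_{B_\sigma}|\na u-(\na u)_{B_\sigma}|^p\leq C\sigma^{N+\ba p}$ with $\ba>0$. You flag this as ``delicate bookkeeping'', correctly, but it is a genuine two-step bootstrap rather than a single dyadic iteration, and without it the argument as written does not close.
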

For the proof of above theorem we refer \cite{ED}.

\spa

\begin{defn}
Let $\Ome$ is an open subset of $\Rn$ with smooth boundary. We say that $\pa\Ome$ is orthogonal to the singular set if for every $(0, z_0)\in \pa\Ome$ the normal at $(0, z_0)$ is in $\{0\}\times R^{N-k}$
\end{defn}

\begin{teor}\label{C1al}
If $u$ is a solution of Equation (\ref{A}) in $\Ome$ with $\pa\Ome$ is orthogonal to the singular set, then $u\in C^1(\bar\Ome\setminus\{y=0\})$ and for $t<\frac{k}{N}(\frac{p-1}{p})$, $u\in C^{1,\al}(\bar\Ome)$ for some $0<\al<1$.
\end{teor}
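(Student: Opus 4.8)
The plan is to split the regularity statement into two parts: the interior/away-from-singularity $C^1$ regularity on $\bar\Ome\setminus\{y=0\}$, and the global $C^{1,\al}(\bar\Ome)$ regularity under the stronger hypothesis $t<\frac{k}{N}\big(\frac{p-1}{p}\big)$. For the first part, I would observe that away from the singular set $\{y=0\}$ the right-hand side $\frac{u^{\ps-1}}{|y|^t}$ is a locally bounded (indeed continuous) function, since on any compact subset of $\bar\Ome\setminus\{y=0\}$ the weight $|y|^{-t}$ is bounded and $u$ is bounded (by the $L^\infty$ bound established via Theorem \ref{Vas}(b) in the existence argument). Hence on such sets $u$ solves $-\Delta_p u = \varphi$ with $\varphi$ locally in $L^q$ for every $q$, and Theorem \ref{ED} gives $u\in C^{1,\al}_{loc}$ there. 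The orthogonality hypothesis on $\pa\Ome$ is what lets me push this up to the boundary portion away from $\{y=0\}$: near a boundary point $(0,z_0)$ the normal lies in $\{0\}\times\R^{N-k}$, so one can flatten the boundary and reflect in a way that keeps the weight $|y|^{-t}$ well-behaved, reducing to an interior-type estimate. This yields $u\in C^1(\bar\Ome\setminus\{y=0\})$.

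For the global statement, the key is to control the right-hand side in an $L^q_{loc}$ space with $q>\frac{Np}{p-1}$ on a full neighborhood of the singular set, so that Theorem \ref{ED} applies across $\{y=0\}$ as well. Here I would use the integrability gained from Theorem \ref{Vas}: since $u\in L^\infty(\Ome)$, the worst behavior of $\varphi=\frac{u^{\ps-1}}{|y|^t}$ comes entirely from the weight $|y|^{-t}$, and the question becomes for which $q$ we have $|y|^{-t}\in L^q_{loc}$ near $\{y=0\}$. The singular set $\{y=0\}$ has codimension $k$ in $\Rn$, so $|y|^{-tq}$ is locally integrable precisely when $tq<k$, i.e. $q<\frac{k}{t}$. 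To invoke Theorem \ref{ED} I need some admissible $q$ with $\frac{Np}{p-1}<q<\frac{k}{t}$, which is possible exactly when $\frac{Np}{p-1}<\frac{k}{t}$, that is $t<\frac{k(p-1)}{Np}=\frac{k}{N}\big(\frac{p-1}{p}\big)$. This is precisely the hypothesis in the statement, so under it I can choose a valid exponent and conclude $\varphi\in L^q_{loc}(\Ome)$ with $q>\frac{Np}{p-1}$ on a neighborhood of every point, including those on $\{y=0\}$.

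Having secured $\varphi\in L^q_{loc}$ with the right exponent everywhere in $\Ome$, Theorem \ref{ED} gives $u\in C^{1,\al}_{loc}(\Ome)$ on all of $\Ome$ including across the singular set. To upgrade to $\bar\Ome$ I would again invoke the orthogonality of $\pa\Ome$ to the singular set near the boundary points $(0,z_0)\in\pa\Ome$: the boundary is flat-to-first-order in the $y$-directions there, so a standard even-reflection argument in the $z$-variable (combined with the boundary regularity theory for the $p$-Laplacian with right-hand side in $L^q$, $q>\frac{Np}{p-1}$) extends the interior $C^{1,\al}$ estimate up to the boundary. Combining the two regions yields $u\in C^{1,\al}(\bar\Ome)$.

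The main obstacle will be the boundary regularity near the intersection $\pa\Ome\cap\{y=0\}$, where the singularity of the weight meets the boundary simultaneously. Away from $\{y=0\}$ the boundary estimate is classical, and across $\{y=0\}$ in the interior the integrability computation handles everything; but at points where both phenomena occur, one must verify that the orthogonality condition genuinely lets the reflection preserve both the ellipticity structure of $\Delta_p$ and the local integrability $|y|^{-tq}\in L^1$ of the weight. I expect this is where the hypothesis $t<\frac{k}{N}\big(\frac{p-1}{p}\big)$ does its essential work and where the proof requires the most care in setting up the flattened coordinates and checking that the reflected right-hand side stays in the required $L^q$ class.
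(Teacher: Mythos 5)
Your proposal follows essentially the same route as the paper's proof: the $L^\infty$ bound from Theorem \ref{Vas}, the exponent arithmetic $\frac{Np}{p-1}<q<\frac{k}{t}$ (nonempty precisely when $t<\frac{k}{N}\bigl(\frac{p-1}{p}\bigr)$, since $\{y=0\}$ has codimension $k$) to place the right-hand side in $L^q$ and invoke Theorem \ref{ED}, and boundary flattening plus reflection at $\pa\Ome\cap\{y=0\}$ using the orthogonality hypothesis. However, two points need correction.

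The substantive one: your reflection has the wrong parity. Since $u=0$ on $\pa\Ome$, the flattened function $v=u\circ\eta^{-1}$ vanishes on $\{X_N=0\}$, and the correct extension is the \emph{odd} one, $\bar v(X_1,\ldots,X_N)=-v(X_1,\ldots,-X_N)$ for $X_N<0$, which is what the paper (following \cite{T} and Theorem 2.3 of \cite{BS}) uses. Oddness is what makes the gradients match across the interface and makes $\bar v$ a genuine weak solution of a divergence-form equation $-\mbox{div}\,A(x,D\bar v)=\frac{|\bar v|^{\ps-2}\bar v\,|\det B(x)^{-1}|}{|y|^t}$ in the doubled domain $\tilde\Ome$, with $A(x,\zeta)=|\zeta B(x)|^{p-2}\zeta B(x)B^T(x)|\det B(x)^{-1}|$ satisfying the structure conditions needed for DiBenedetto's interior result. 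An \emph{even} reflection of a function with zero Dirichlet data generically has a jump in the normal derivative across $\{X_N=0\}$ (think of $x_N\mapsto |x_N|$), so it solves the equation only up to a surface distribution, and the interior regularity theory cannot be applied to it; as literally stated, that step of your argument fails, though the fix is standard.

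Secondarily, you misplace where the orthogonality hypothesis is used. For the first claim, $u\in C^1(\bar\Ome\setminus\{y=0\})$, the relevant boundary points have $y\neq 0$; there the right-hand side is bounded (once $u\in L^\infty$) and the paper simply quotes Lieberman's boundary $C^{1,\al}$ estimate \cite{L} in a half ball --- no reflection and no orthogonality is needed, and indeed points $(0,z_0)$ are excluded from that statement. Orthogonality and the reflection enter only at $\pa\Ome\cap\{y=0\}$, i.e.\ only for the global $C^{1,\al}(\bar\Ome)$ claim. Also note that the $L^\infty$ bound is not available ``from the existence argument'': it is proved inside this very theorem, by writing the equation as $-\Delta_p u=V\frac{u^{p-1}}{|y|^t}$ with $V=u^{\ps-p}$, checking $V\in L^{p^*/(\pt-p)}$ to get part (a) of Theorem \ref{Vas}, and then upgrading to part (b); your sketch should include this verification rather than cite it as already done. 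Finally, ``indeed continuous'' for the right-hand side away from $\{y=0\}$ is an overstatement at that stage (you only have $u\in L^\infty$), but local boundedness is all you need.
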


\begin{proof}
 From Equation (\ref{A}) we have 
\begin{displaymath}
 -\mbox{div}(|\na u|^{p-2}\na u)=V\frac{u^{p-1}}{|y|^t}
\end{displaymath}
where $V=u^{\ps-p}$. Since we have $t\leq p<2\leq k$, so $t<k\frac{N-p}{N-k}$. Therefore to claim $u\in L^q(\Ome)\ \  \fa\ \ p^*\leq q<\infty$, we need to check $V\in L^r(\Ome)$ where $r=\frac{p^*}{\pt-p}$ [using Theorem \ref{Vas}]. Now, $t\leq s\Rightarrow \frac{p^*}{\pt-p}\leq \frac{p^*}{\ps-p}$. Clearly $V\in L^{\frac{p^*}{\ps-p}}(\Ome)$, therefore $V\in L^r(\Ome)$. Therefore $u\in L^q(\Ome)\ \  \fa\ \ p^*\leq q<\infty$.
Now let us choose $t_0>0$ s.t. $(\ps-p)t_0>p^*$. Therefore $V\in L^{t_0}(\Ome)$ with $t_0>r$ and hence $u\in L^\infty(\Ome)$ using Theorem \ref{Vas}.

\spa

We note that for any domain $\Ome_1\subset\Ome$ such that $\bar\Ome_1\cap\{y=0\}=\emptyset$,
$-\Delta_p u\in L^\infty(\Ome_1)$ as $u\in L^\infty(\Ome)$. So we can apply Theorem \ref{ED}
in $\Ome_1$ and therefore $u\in C^1(\Ome_1)$. For the boundary points away from $\{y=0\}$, again
we have the right hand side of (\ref{A}) in $L^\infty$ and hence we can use the $C^{1,\al}$
estimate by Lieberman \cite{L} in a half ball to conclude that $u$ is $C^1$ at those points. This completes the proof of the first part.

\spa

If $t<\frac{k}{N}\big(\frac{p-1}{p}\big)$, we can choose $q>0$ s.t. $\frac{Np}{p-1}<q<\frac{k}{t}$. Since $u\in L^\infty(\Ome)$  we have right hand side of Equation (\ref{A}) in $L^q(\Ome)$. Hence using Theorem \ref{ED} we have $u\in C^{1,\al}_{loc}(\Ome)$ for some $0<\al<1$. If $x\in\pa\Ome\setminus\{y=0\}$, 
we can have a ball $B(x,r)\subset\Rn$ such that $\{y=0\}\cap(B(x,r)\cap\Ome)=\emptyset$.
Therefore $C^{1,\al}$ regularity around $x$ follows from Lieberman (cf. \cite{L}).

\spa

Therefore it remains to prove the boundary regularity at $\pa\Ome\cap\{y=0\}$. Since $\pa\Ome$ is orthogonal to the singular set, normal at $x_0$ is in ${0} \times R^{N-k}$ . We may assume normal at $(0, z_0 )$ is $(0, \ldots, 1)$ (rotating in the z variable if needed). Now at $x_0$ we use a local reßection method as in \cite{T}. Set $B_r (a) = \{x \in \Rn : |x-a| < r\}$. For $ x_0 \in \pa\Ome \cap \{y = 0\}$, there exists $R > 0$ and a smooth function $f$ such that $B_R(x_0 ) \cap \Ome = \{ (y, z) : z_{N-k} > f (y, z_1 , \ldots , z_{N-k-1}) \}$. Let us ßatten the boundary near $x_0$ . Therefore we can have smooth diffeomorphism of the form $\eta(y, z ) = (y, Z_1 , \ldots , Z_{N-k-1} , Z_{N-k} - f (y, Z_1 , \ldots,  Z_{N-k-1} ))$ where $Z$ denotes the coordinate after rotation. 
\begin{equation}
 \left\{\begin{array}{rll}
  \eta(B_R(x_0)\cap\Ome) &=& B_1(0)\cap\{(y, z_1, \ldots, z_{N-k}): \ z_{N-k}>0\}=B_1^+,
\\
\eta(B_R(x_0)\setminus\bar\Ome) &=& B_1(0)\cap\{(y, z_1, \ldots, z_{N-k}): \ z_{N-k}<0\}=B_1^-.
 \end{array}\right.
\end{equation}

Let $R(B_1^+)$ denote the reflection of $B_1^+$ about $\{X_N=0\}$ and $\tilde \Ome=B_1^+\cup R(B_1^+)\cup \mbox{int}  (\{X_N=0\}\cap\pa B_1^+)$. We define $v=u\circ\eta^{-1}$ and $\bar{v}$ by
\[\bar{v}(X_1,\ldots,X_N)=\left\{\begin{array}{ll}
v(X_1,\ldots,X_N)\ & \mbox{if}\ X_N>0,\\
-v(X_1,\ldots,-X_N)\ & \mbox{if}\ X_N<0.
                                \end{array}
\right.\]

As $\bar{v}$ vanishes on $X_N=0$ a straightforward computation shows that it satisfies
an equation of following type (see Theorem 2.3 in \cite{BS})
\begin{displaymath}
 -\mbox{div}A(x,D\bar{v})=\frac{\bar {v}^{\ps-1}|\mbox{det} B(x)^{-1}|}{|y|^t} \ \  \mbox{in}\  \  \tilde\Ome,
\end{displaymath}
where $A(x, \zeta)= |\zeta B(x)|^{p-2}\zeta B(x)B^T(x)|\mbox{det} B(x)^{-1}|$ and $B(x)=\nabla\eta(x) $satisfies all the structural and regularity assumptions need to apply the local regularity result of 
Theorem $2$ in \cite{ED}. Therefore we have local $C^{1,\alpha}$ regularity around $x_0$. This completes the proof.

\end{proof}

\spa

\begin{teor}\label{W2p}
 If $u$ is a weak solution of the equation (\ref{A}) then $u\in W^{2,p}(\Ome)$
for $t<\frac{k(p-1)}{p}$ if $\pa\Ome$ is orthogonal to the singular set.
\end{teor}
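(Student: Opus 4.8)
The plan is to reduce the second-order regularity to the standard statement that, for $1<p<2$, a weak solution of $-\Delta_p u=f$ with $f\in L^{p'}(\Ome)$ (here $p'=\frac{p}{p-1}$) lies in $W^{2,p}_{loc}$, and then to globalize it using exactly the boundary flattening and odd reflection already set up in the proof of Theorem \ref{C1al}. First I would record that, by the argument used there (Theorem \ref{Vas}(b) applies since $s<p\le 2\le k$ gives $s<k$ and $s(N-k)<p(N-k)\le k(N-p)$), the solution satisfies $u\in L^\infty(\Ome)$; since $\ps-1>0$ and $u\geq 0$, the right-hand side $f\df \frac{u^{\ps-1}}{|y|^t}$ then obeys the pointwise bound $|f(x)|\le C|y|^{-t}$.

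The key observation is that the hypothesis $t<\frac{k(p-1)}{p}$ is precisely what places $f$ in $L^{p'}(\Ome)$. Writing $x=(y,z)\in\Rk\times\Rnk$ and using Fubini,
\[
\Iom |f|^{p'}\le C\Iom \frac{dx}{|y|^{tp'}}\le C\int_{\pi_z(\Ome)}\Big(\int_{\{y:(y,z)\in\Ome\}}\frac{dy}{|y|^{tp'}}\Big)\,dz,
\]
and the inner integral over a bounded region of $\Rk$ is finite near $y=0$ exactly when $tp'<k$, i.e. $t<\frac{k}{p'}=\frac{k(p-1)}{p}$. Since $\Ome$ is bounded and $\tfrac{1}{|y|^t}$ is locally integrable to high powers away from $\{y=0\}$, this yields $f\in L^{p'}(\Ome)$.

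Next I would prove the interior estimate by the difference-quotient method. Testing the difference of the equations for $u$ and its translate $u^h=u(\cdot+he_i)$ against $-D_{-h}(\zeta^2 D_h u)$ with a cut-off $\zeta$, and invoking the same monotonicity inequality $\langle|a|^{p-2}a-|b|^{p-2}b,\,a-b\rangle\ge C(|a|+|b|)^{p-2}|a-b|^2$ already used in Section 2 (together with the companion bound $|A(a)-A(b)|\le C(|a|+|b|)^{p-2}|a-b|$ valid for $1<p<2$), one controls the weighted quantity $\int\zeta^2(|\na u^h|+|\na u|)^{p-2}|D_h\na u|^2$ by $\|f\|_{L^{p'}}$ and lower-order terms involving $\na\zeta$ and $\|\na u\|_{L^p}$. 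The degeneracy $1<p<2$ is then absorbed by the Hölder interpolation
\[
\int \zeta^{2p}|D_h\na u|^p\le\Big(\int \zeta^{2}(|\na u^h|+|\na u|)^{p-2}|D_h\na u|^2\Big)^{\frac{p}{2}}\Big(\int \zeta^{\frac{2p}{2-p}}(|\na u^h|+|\na u|)^{p}\Big)^{\frac{2-p}{2}},
\]
whose last factor is finite because $u\in\Wuno$. Combining the two estimates bounds $\int\zeta^{2p}|D_h\na u|^p$ uniformly in $h$; letting $h\to0$ gives $\na u\in W^{1,p}_{loc}$, that is $u\in W^{2,p}_{loc}(\Ome)$. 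Interior points and boundary points off $\{y=0\}$ are then routine, since there $f$ is bounded and $u$ is $C^{1}$ by Theorem \ref{C1al}, so tangential difference quotients near the boundary plus recovery of the normal derivative from the equation suffice. For the remaining points $x_0\in\pa\Ome\cap\{y=0\}$ I would reuse the diffeomorphism $\eta$ and the odd reflection of Theorem \ref{C1al}: since $\eta$ leaves the $y$-variable fixed, the reflected function $\bar v$ solves $-\mbox{div}\,A(x,D\bar v)=\frac{\bar v^{\ps-1}|\det B(x)^{-1}|}{|y|^t}$ on $\tilde\Ome$ with right-hand side still dominated by $C|y|^{-t}\in L^{p'}(\tilde\Ome)$, so the same second-order estimate applies to this structurally admissible operator and transfers back through $\eta$.

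The step I expect to be the main obstacle is the passage across the singular set $\{y=0\}$: away from it the operator is nondegenerate along the solution and $W^{2,p}$ is classical, so the entire content of the theorem is the quantitative threshold $t<\frac{k(p-1)}{p}$, which is sharp for the $L^{p'}$ integrability of $|y|^{-t}$ over $\Ome$ and is exactly what lets the difference-quotient energy estimate run uniformly up to and across $\{y=0\}$.
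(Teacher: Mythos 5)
Your overall strategy matches the paper's: the $L^\infty$ bound via Theorem \ref{Vas}, the observation that $t<\frac{k(p-1)}{p}$ is exactly $tp'<k$ and hence puts $\frac{u^{\ps-1}}{|y|^t}$ in $L^{p'}(\Ome)$, a difference-quotient energy estimate built on the monotonicity inequality, the passage from the weighted $L^2$ quantity to $\int|D^h_i\na u|^p$ (you use H\"older interpolation where the paper uses Young's inequality with the weight $Y$, an immaterial difference), and flattening plus odd reflection at $\pa\Ome\cap\{y=0\}$. But there is a genuine gap at the core of the interior estimate, namely in the cut-off cross term. Testing with $D^{-h}_i(\zeta^2D^h_iu)$ produces, besides the coercive term $\int\zeta^2\langle D^h_iA(\na u),D^h_i\na u\rangle$, the term $2\int\zeta\,(D^h_iu)\,\langle D^h_iA(\na u),\na\zeta\rangle$. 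Your plan is to control it by the companion bound $|A(a)-A(b)|\le C(|a|+|b|)^{p-2}|a-b|$, but after Cauchy--Schwarz (pairing so that the absorbable factor is $\zeta^2(|\na u|+|\na u(\cdot+he_i)|)^{p-2}|D^h_i\na u|^2$) you are left with
\begin{displaymath}
\int_{B_{2R}}\big(|\na u|+|\na u(\cdot+he_i)|\big)^{p-2}\,|D^h_iu|^2\,|\na\zeta|^2 ,
\end{displaymath}
and for $1<p<2$ this weight blows up precisely where both endpoint gradients are small, while $|D^h_iu|$ need not be small there (it is controlled by $\int_0^1|\na u(x+the_i)|\,dt$, i.e.\ by the gradient along the whole segment, not at the endpoints). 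This quantity is not bounded by $\|\na u\|^p_{L^p}$ or by anything else available a priori, uniformly in $h$, and no rearrangement of Young/H\"older exponents both keeps the absorbable term with weight exponent $p-2$ and power $2$ of $|D^h_i\na u|$ and removes the degenerate weight from the factor carrying $|D^h_iu|$. This is exactly the point where $1<p<2$ differs from $p\ge2$, and it is the step your sketch treats as routine.

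The missing idea, which is how the paper gets through, is to never differentiate $A(\na u)$ in the cross term at all: write $D^h_i(A(x,\na u))=\frac{\pa}{\pa x_i}X$ with $X=\int_0^1A\big(x+the_i,\na u(x+the_i)\big)\,dt$ and integrate by parts, so the derivative falls on $\zeta D^h_iu\,\na\zeta$. The resulting terms involve either $|X|\,|D^h_iu|$, handled by Young's inequality since $|X|\le C\int_0^1|\na u(\cdot+the_i)|^{p-1}dt$ lies in $L^{p'}$, or $\zeta|X|\,|D^h_i\na u|$, split as $\var\,\zeta^2Y^{p-2}|D^h_i\na u|^2+C(\var)\,Y^{2-p}|X|^2$ with $Y^2=1+|\na u|^2+|\na u(\cdot+he_i)|^2$ and $Y^{2-p}|X|^2\le Y^p+|X|^{p'}$; thus the degenerate weight only ever multiplies the absorbable quantity, never $|D^h_iu|^2$. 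A secondary point: at boundary points off $\{y=0\}$ you propose tangential difference quotients plus ``recovery of the normal derivative from the equation''; that device belongs to uniformly elliptic equations and breaks down where $\na u$ degenerates, which is why the paper instead flattens and reflects at those points too, checking in the Appendix that the transformed operator $A(x,\zeta)=|\zeta B|^{p-2}\zeta BB^T|\det B^{-1}|$ satisfies the structure conditions (\ref{qp1})--(\ref{qp3}) so that the interior estimate applies to the reflected problem.
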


\begin{proof} First we prove interior regularity for the
quasilinear degenerate equation of type
\begin{eqnarray}
-\mbox{div}A(x,\na u) = \dfrac{u^{\ps-1}}{|y|^t}\label{quasi}
\end{eqnarray}
where $A(x,\zeta):\Ome\times\Rn\to\Rn$ satisfies the following properties:
\begin{eqnarray}
(A(x,\zeta)-A(x,\zeta'))(\zeta-\zeta') &\geq & C(1+|\zeta|^2+|\zeta'|^2)^\frac{p-2}{2}
|\zeta-\zeta'|^2\label{qp1}
\\
|A(x,\zeta)| &\leq & C_1|\zeta|^{p-1}\label{qp2}
\\
\sum_{i=1}^n |A_i(x,\zeta)-A_i(y,\zeta)| &\leq & C_2(1+|\zeta|)^{p-1}|x-y|.\label{qp3}
\end{eqnarray}
where $A_i$ is the $i$th coordinate of $A$.
Let $u\in\Wuno\cap L^\infty(\Ome)$ be a solution of (\ref{quasi}).
Therefore for any $v\in\Wuno$ we have
\begin{eqnarray}
\Iom A(x,\na u(x)).\na v = \Iom \frac{u^{\ps-1}v}{|y|^t}.\label{eqn2}
\end{eqnarray}
Let $R>0$ be such that $B_{4R}\subset\Ome$ where $B_R$ denotes the ball of radius
$R$ around some point in $\Ome$. Let $\xi\in C^\infty_0(\Ome)$ be such that $\mbox{supp}(\xi)\subset B_{2R}$
, $\xi=1$ in $B_R$, $0\leq\xi\leq 1$ $|\na\xi|\leq\frac{1}{R}$ and $|D^2\xi|\leq\frac{c}{R^2}$ for
some constant $c$. Now for $h>0$, we choose
\begin{displaymath}
 v(x)\df D^{-h}_i(\xi^2(x)D^h_iu(x)),
\end{displaymath}
where
\begin{displaymath}
 D^h_iu(x)\df \frac{u(x+he_i)-u(x)}{h}\ \ \mbox{where}\ i=1,\ldots,N,
\end{displaymath}
and $e_i$ is the $i$-th canonical basis vector in $\Rn$.
Therefore from (\ref{eqn2}) we have
\begin{eqnarray}
\Iom D^h_i(A(x,\na u))\na(\xi^2D^h_iu)=-\Iom\frac{u^{\ps-1}D^{-h}_i(\xi^2D^h_iu)}{|y|^t}.\label{eqn3}
\end{eqnarray}
Now
\begin{eqnarray*}
 \mbox{LHS of}\ (\ref{eqn3}) &=& \Iom \xi^2 \langle D^h_i(A(x,\na u)),\na(D^h_iu)
\rangle
\\
&& + 2\Iom \xi(D^h_iu)\langle D^h_i(A(x,\na u)),\na\xi\rangle.
\end{eqnarray*}
Hence from (\ref{eqn3}) we have
\begin{eqnarray}
\Iom \xi^2 \langle D^h_i(A(x,\na u)),D^h_i(\na u)\rangle &= & 
-2\Iom \xi(D^h_i u)\langle D^h_i(A(x,\na u)),\na\xi\rangle \nonumber
\\
&& - \Iom \frac{u^{\ps-1}D^{-h}_i(\xi^2D^h_iu)}{|y|^t}.\label{eqn4}
\end{eqnarray}
Using (\ref{qp1}) we get
\begin{eqnarray*}
 \mbox{LHS of (\ref{eqn4})} &=& \Iom\frac{\xi^2}{h^2}\Big[A(x+he_i,\na
 u(x+he_i))-A(x+he_i,\na u(x))
\\
& & +  A(x+he_i,\na u(x))-A(x,\na u(x))\Big](\na u(x+he_i)-\na u(x))
\\
&\geq & C\Iom \xi^2\Big[(1+|\na u(x+he_i)|^2+|\na u(x)|^2)^\frac{p-2}{2}|D^h_i\na u(x)|^2+
\\
&& (A(x+he_i,\na u(x))-A(x,\na u(x)))\frac{D^h_i(\na u(x))}{h}\Big]
\end{eqnarray*}

Let us define 
\begin{displaymath}
 Y(x)^2=1+|\na u(x)|^2+|\na u(x+he_i)|^2
\end{displaymath}

Therefore we have from (\ref{qp3}) and (\ref{eqn4})
\begin{eqnarray}
C\Iom {\xi}^2|D_i^h(\na u(x))|^2 Y^{p-2}&&\leq C_2\Iom{\xi}^2(1+|\na u(x)|)^{p-1}|D_i^h\na u(x)|\no\\
&&-2\Iom \xi(D^h_i u)\langle D^h_i(A(x,\na u)),\na\xi\rangle\no
\\
&& - \Iom \frac{u^{\ps-1}D^{-h}_i(\xi^2D^h_iu)}{|y|^t}.\label{a}
\end{eqnarray}

Using the following relation
for $f\in L^1_{loc}(\Ome)$
\begin{displaymath}
 D^h_i f(x)=\frac{\pa}{\pa x_i}\big(\int^1_0 f(x+the_i)dt\big),
\end{displaymath}
we have 
\begin{displaymath}
 D^h_i(A(x,\na u))=\frac{\pa}{\pa x_i}\int_0^1|A(x+the_i,\na u(x+the_i))dt
\ \ [\mbox{Using (\ref{qp2})}]
\end{displaymath}
Say,
\begin{displaymath}
X=\int_0^1A (x+the_i,\na u(x+the_i))dt
\end{displaymath}

Therefore 2nd term of RHS of (\ref{a}) becomes
\begin{eqnarray}
&& -2\Iom \xi(D^h_i u)\langle D^h_i\langle A(x,\na u(x))\na\xi\rangle \no
\\
&&= 2\langle X,\frac{\pa}{\pa x_i}(D^h_iu.\xi\na\xi)\rangle\no
\\
&&\leq\frac{2}{R}\Iom\xi |X||D^h_i u_{x_i}|dx+\frac{C}{R^2}\int_{B_{2R}}|D^h_i u||X| dx \label{eqn5}
\end{eqnarray}

Therefore using the above relations, we have from (\ref{a})
\begin{eqnarray}
 C\Iom {\xi}^2|D^h_i(\na u(x))|^2 Y^{p-2}dx &\leq& C_2\Iom{\xi}^2(1+|\na
 u(x)|)^{p-1}|D_i^h\na u(x)|\no
\\
&+&\frac{2}{R}\Iom\xi |X| |D^h_i
 u_{x_i}|dx+\frac{C}{R^2}\int_{B_{2R}}|D^h_i u| |X| dx\no
\\
&-&\Iom\frac{u^{\ps-1}D^{-h}_i(\xi^2D^h_iu)}{|y|^t}.\label{b}
\end{eqnarray}
Now 1st term of RHS of (\ref{b}) can be estimated as follows
\begin{eqnarray*}
 C_2\Iom{\xi}^2(1+|\na u(x)|)^{p-1}|D_i^h\na u(x)|&\leq& \var\Iom {\xi}^2|D_i^h\na u(x)|^p
 dx\no
\\
&+& \frac{1}{C(\var)}\Iom{\xi}^2(1+|\na u(x)|)^p dx\no
\\
&\leq& \var\Iom {\xi}^2|D_i^h\na u(x)|^p dx+ M(\var)
\end{eqnarray*}

Again,
\begin{eqnarray*}
\frac{2}{R}\xi |X| |D^h_i u_{x_i}|&\leq&\frac{2}{R}\xi Y^{\frac{p-2}{2}}|D_i^h(\na u)|Y^{\frac{2-p}{2}}|X|
\\
&\leq& \var{\xi}^2Y^{p-2}|D_i^h\na u|^2+\frac{1}{\var R^2}Y^{2-p}|X|^2
\end{eqnarray*}
Therefore we have from (\ref{b})
\begin{eqnarray}
 (C-\var)\Iom\xi^2|D_i^h\na
 u|^2Y^{p-2}dx &\leq&
 \frac{1}{\var R^2}\int_{B_{2R}}Y^{2-p}|X|^2+\frac{C}{R^2}\int_{B_{2R}}|D_i^h
 u|X|dx\no
\\
&+& M(\var)+\var\Iom{\xi}^2|D_i^h \na u(x)|^p\no
\\
&-&\Iom\frac{u^{\ps-1}D^{-h}_i(\xi^2D^h_iu)}{|y|^t}\label{eqn6}
\end{eqnarray}
Now using following there inequalities (which follows by $ab\leq a^p+b^{p^\prime}$, where $p>1$ and $\frac{1}{p}+\frac{1}{p^\prime}=1$):
\begin{eqnarray*}
|D_i^h \na u|^p &\leq& Y^{p-2}|D_i^h \na u|^2+Y^p
\\
Y^{2-p}|X|^2 &\leq& Y^p+|X|^{\frac{p}{p-1}}
\\
|D_i^h u||X| &\leq& |D_i^h u|^p+|X|^{\frac{p}{p-1}},
\end{eqnarray*}
we get from (\ref{eqn6})
\begin{eqnarray}
 (C-2\var)\Iom\xi^2|D_i^h \na u|^p dx &\leq& C_1\int_{B_{2R}}Y^p
 dx+C_2\int_{B_{2R}}X^{\frac{p}{p-1}}dx\no
\\
&+& C_3\int_{B_{2R}}|D_i^h u|^p dx+M(\var)\no
\\
&-&\Iom\frac{u^{\ps-1}D^{-h}_i(\xi^2D^h_iu)}{|y|^t}\label{eqn7}
\end{eqnarray}
where the constants depend on $R$. Again we have,
\begin{displaymath}
 \int_{B_{2R}}Y^p\leq C_4 R^N+C_5 \int_{B_{3R}}|\na u|^p
\end{displaymath}
\begin{eqnarray}
\int_{B_{2R}}X^{\frac{p}{p-1}}dx &=& \int_{B_{2R}}\big(\int_0^1 |A(x+the_i,\na u(x+the_i))|dt\big)^\frac{p}{p-1}dx\no
\\
&\leq& \int_{B_{2R}}\int_0^1|A(x+the_i,\na u(x+the_i))|^\frac{p}{p-1}dtdx\no
\\
&\leq& C_1\int_{B_{2R}}\int_0^1|\na u(x+the_i)|^pdtdx \  \  [\mbox{using (\ref{qp2})}]\no
\\
&\leq& C_1\int_{B_{3R}}|\na u|^p dx
\end{eqnarray}
for $h$ to be small enough and $\int_{B_{2R}}|D_i^h u|^p dx\leq \int_{B_{3R}}|\na u|^p dx$.\\
Now the last integral in (\ref{eqn7}) can be estimated as follows:
\begin{eqnarray*}
&& \Iom\frac{u^{\ps-1}D^{-h}_i(\xi^2D^h_iu)}{|y|^t}
\\
&& \leq \var\Iom |D^{-h}_i(\xi^2D^h_iu)|^p+
C(\var)\Iom \Big(\frac{u^{\ps-1}}{|y|^t}\Big)^{p^\prime}\ [\mbox{Using Young's Inequality}]
\end{eqnarray*}
where $p^\prime=\frac{p}{p-1}$ and $C(\var)=\frac{(\var p)^{-\frac{p^\prime}{p}}}{p^\prime}$.
\begin{eqnarray*}
 \mbox{The 2nd term on the RHS} &\leq& 
M\Iom \frac{1}{|y|^{p^\prime t}}\ \ [\mbox{Since}\ u\in L^\infty(\Ome)]
\\
&\leq & K\ \ [\mbox{Since}\ p^\prime t<k],
\end{eqnarray*}
where $M,\ K$ are suitable constants depending on $\var$.
\begin{eqnarray*}
 \mbox{1st term on the RHS} &\leq & \var \int_{B_{3R}} |\na(\xi^2D^h_iu)|^p
\\
&\leq & C_1\var\Big[\int_{B_{3R}}\xi^2|D^h_i(\na u)|^p+\int_{B_{3R}}|D^h_iu|^p\Big]
\\
&\leq & C_1\var\Big[\int_{B_{3R}}\xi^2|D^h_i(\na u)|^p+\Iom|\na u|^p\Big],
\end{eqnarray*}
where $C_1$ is a constant depending on $R$. Therefore choosing $\var>0$ small enough
such that $2\var+C_1\var<C$ we have from (\ref{eqn7})
\begin{displaymath}
 \int_{B_R}|D^h_i\na u|^p\leq C(R,\var,N,p)\ \ \ \fa\ i=1,\ldots,N
\end{displaymath}
and for all sufficiently small $|h|\neq 0$. Therefore we have $D^h_iu$ is bounded in
$\Wuno\Rightarrow$ converges weakly and pointwise up to a subsequence to $u_{x_i}$.
Therefore by weak lower semicontinuity we have $\displaystyle\Iom|\na u_{x_i}|^p\leq M$
and this proves that $u\in W^{2,p}_{loc}(\Ome)$.

\spa

Now we come to the proof of Theorem \ref{W2p}. We note that if $A(x,\zeta)=|\zeta|^{p-2}\zeta$ then all the conditions in (\ref{qp1})-(\ref{qp3}) are
satisfied. Hence the solution $u$ of (\ref{A}), which is also in $L^\infty(\Ome)$
(see first paragraph of proof of Theorem \ref{C1al}), has local
$W^{2,p}$ regularity in the interior of $\Ome$. For the boundary points we first
flatten the boundary locally around $x_0\in\pa\Ome\setminus\{y=0\}$ using a $C^2$ deffiomorphism $\eta$ as in proof of Theorem \ref{C1al}. If $v=u\circ\eta^{-1}$
then $v$ satisfies
\begin{equation}
-\mbox{div}A(x,\na v)=g(x,v(x))\ \ \ \mbox{in}\ B^+_1(0)\label{diff}
\end{equation}
where $A(x,\zeta)=|\zeta B(x)|^{p-2}\zeta B(x)B^T(x)|\mbox{det}B(x)^{-1}|$, $B(x)=\nabla\eta(x), g\in L^\infty(B^+_1(0))$ and $B^T$ denotes the transpose of $B$
as we have seen in Theorem $3.3$. 
This operator satisfies
all the assumptions (\ref{qp1})-(\ref{qp3}) [see Appendix]. Using Leiberman \cite{L}
we have $u\in C^1(\pa\Ome\setminus\{y=0\})$ and so we can apply reflection method
to the above equation . If $x_0\in\pa\Ome\cap\{y=0\}$ we can apply the reflection method to the equation (\ref{diff}) (
using the same idea as in Theorem 2.3 of \cite{BS}), where reflection is given in the last coordinate of $x$.  Therefore
if we define $\bar v$ as in proof of Theorem \ref{C1al}, we note that $\bar v$ satisfies an equation
of type
\begin{displaymath}
-\mbox{div}\bar{A}(x,\na \bar v)=\frac{\bar{v}^{\ps-1}|\mbox{det}B(x)^{-1}|}{|y|^t}\ \ \mbox{in}\ \tilde\Ome,
\end{displaymath}
where $\bar{A}$ satisfies all the assumptions (\ref{qp1})-(\ref{qp3}). Therefore we have $\bar v\in W^{2,p}(\Ome^\prime), \eta(x_0) \in \Ome^\prime \Subset \tilde\Ome$,  which gives in particular boundary $W^{2,p}$ regularity near $x_0$. This completes the proof.
 
\end{proof}

\section{Non-existence of non trivial solutions in critical case}

\begin{teor} There does not exist any non trivial solution of the Equation (\ref{A}) in a star-shaped bounded domain w.r.t. $0$ for $t=s$ and
$s<k\big(\frac{p-1}{p}\big)$ if $\pa\Ome$ is orthogonal to the singular set.
\end{teor}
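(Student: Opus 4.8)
The plan is to prove non-existence via a Pohozaev-type identity, which is the standard approach for critical exponent problems in star-shaped domains. The key obstacle here, flagged by the authors in the introduction, is regularity: the usual Pohozaev identity is derived by multiplying the equation by $x\cdot\nabla u$ and integrating by parts, and these manipulations require enough smoothness to justify all the integration by parts. The singularity at $\{y=0\}$ prevents $u$ from being globally $C^1$, so I would lean heavily on the regularity results already established. Specifically, for $t=s<k\big(\frac{p-1}{p}\big)$ the hypothesis guarantees (via Theorem \ref{W2p}) that $u\in W^{2,p}(\Ome)$ since $s<k\big(\frac{p-1}{p}\big)$, and away from the singular set $u\in C^1(\bar\Ome\setminus\{y=0\})$ by Theorem \ref{C1al}. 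These two facts together are what make the formal computation rigorous.

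The main computation: set $x=(y,z)$ and test the equation against the dilation vector field applied to $u$. The Pohozaev identity for the $p$-Laplacian is obtained by computing $\Iom (x\cdot\na u)\,\mbox{div}(|\na u|^{p-2}\na u)$ in two ways. Integrating by parts on the left-hand side produces a bulk term proportional to $\Iom |\na u|^p$ together with a boundary term on $\pa\Ome$. Because $u\in\Wuno$ vanishes on $\pa\Ome$, the tangential derivatives vanish and the boundary contribution reduces to $\frac{p-1}{p}\Idom |\na u|^p (x\cdot\nu)$, where $\nu$ is the outward normal. On the right-hand side, I would compute $\Iom (x\cdot\na u)\frac{u^{\ps-1}}{|y|^s}$ by integrating by parts; writing $u^{\ps-1}(x\cdot\na u)=\frac{1}{\ps}\,x\cdot\na(u^{\ps})$ and handling the factor $|y|^{-s}$ carefully, the divergence of $\frac{x}{|y|^s}$ contributes the constant $N-s$ (since $\mbox{div}(x)=N$ and $x\cdot\na|y|^{-s}=-s|y|^{-s}$ as $x\cdot\na|y|=|y|$ in the radial $y$-direction contributes $-s$ after accounting for the $z$-variables). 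Collecting terms gives a relation of the schematic form
\begin{displaymath}
\Big(\frac{N-p}{p}\Big)\Iom|\na u|^p = \frac{N-s}{\ps}\Iom\frac{u^{\ps}}{|y|^s} + \frac{p-1}{p}\Idom |\na u|^p(x\cdot\nu).
\end{displaymath}
Combining this with the equation tested against $u$ itself (i.e. $\Iom|\na u|^p=\Iom\frac{u^{\ps}}{|y|^s}$) and using the definition $\ps=\frac{p(N-s)}{N-p}$, the two volume integrals cancel exactly, leaving only the boundary term.

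The final step exploits the geometry. Star-shapedness with respect to $0$ means $x\cdot\nu\geq 0$ on $\pa\Ome$, so the boundary term is non-negative; the Pohozaev balance then forces $\Idom |\na u|^p(x\cdot\nu)=0$, and hence $|\na u|=0$ on the portion of $\pa\Ome$ where $x\cdot\nu>0$. A unique continuation or strong maximum principle argument (using that $u\geq 0$ solves the equation) then forces $u\equiv 0$, contradicting non-triviality. The delicate point I would guard against is the behavior near $\pa\Ome\cap\{y=0\}$: here I must check that the singular factor $|y|^{-s}$ does not produce an uncontrolled contribution in the integration by parts. This is precisely where the condition $s<k\big(\frac{p-1}{p}\big)$ and the orthogonality of $\pa\Ome$ to the singular set enter — the orthogonality ensures $x\cdot\nu$ and the relevant boundary integrand remain integrable against $|y|^{-s}$ near $\{y=0\}$, and the $W^{2,p}$ regularity up to the boundary (valid exactly under $s<k\big(\frac{p-1}{p}\big)$) legitimizes approximating $u$ by smooth functions or excising a shrinking tubular neighborhood of $\{y=0\}$ and showing the excised contributions vanish in the limit. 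I expect this limiting argument near the singular set, rather than the algebraic bookkeeping of the identity, to be the genuine technical core of the proof.
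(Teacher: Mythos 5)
Your proposal is correct and follows essentially the same route as the paper: a Pohozaev identity obtained by testing with $(x\cdot\na u)\va_\var$, where $\va_\var$ cuts off a shrinking neighborhood of $\{y=0\}$ (exactly the excision argument you anticipate as the technical core, controlled in the paper via Hardy's inequality), with Theorems \ref{W2p} and \ref{C1al} justifying the integrations by parts, the critical exponent making the volume terms cancel, and star-shapedness reducing everything to $\Idom\lan x,\nu\ran|\na u|^p\,ds=0$. The paper's endgame is your ``strong maximum principle'' option made precise as Hopf's lemma (Theorem \ref{vaz}) --- not unique continuation, which is unavailable for the $p$-Laplacian --- and this is the second place the hypothesis $s<k\big(\frac{p-1}{p}\big)<\frac{k}{2}$ enters, since Vazquez's result requires $\Delta_p u\in L^2_{loc}(\Ome)$.
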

\begin{proof}
We will prove the theorem by contradiction. Therefore let us suppose $\Ome$ be a star shaped bounded domain and equation (\ref{A}) has a non trivial solution $u$. Hence by Theorem
\ref{W2p}, $u\in W^{2,p}(\Ome)$. Let $\va\in C^\infty(\R)$ be such that $\mbox{supp}(\va)
\subseteq [1,\infty)$, $0\leq\va\leq 1$ and $\va\equiv 1$ in $[2,\infty)$. Define
$\va_\var(x)\df\va(\frac{|y|}{\var})$. Therefore $\na\va_\var(x)=(\frac{y}{\var|y|}\va'(\frac{|y|}{\var}),0)$. Applying $\lan x.\na u\ran\va_\var\in W^{1,p}(\Ome)$ as a test function we have from (\ref{A})
\begin{eqnarray}
-\Iom\mbox{div}(|\na u|^{p-2}\na u)\lan x.\na u\ran\va_\var = \Iom \frac{u^{\ps-1}}{|y|^s}\lan x.\na u\ran\va_\var.\label{nonex} 
\end{eqnarray}
Now
\begin{eqnarray*}
 \mbox{RHS of (\ref{nonex})} &=& \frac{1}{\ps}\Iom\lan\na 
|u|^{\ps}.x\ran\frac{\va_\var}{|y|^s}
\\
&=& -\frac{1}{\ps}\sum_{i=1}^n\Iom |u|^{\ps}\frac{\pa}{\pa x_i}(\frac{x_i}{|y|^s})\va_\var
-\frac{1}{\ps}\Iom \frac{|u|^{\ps}}{|y|^s}\lan x.\na \va_\var\ran
\\
&& \mbox{[Since}\ u\in\Wuno\ \mbox{and}\ \frac{x}{|y|^s}\in W^{1,p}
(\Ome\setminus\{|y|\leq\var\})]
\\
&=& -\frac{N-p}{p}\Iom\frac{|u|^{\ps}}{|y|^s}\va_\var-\frac{1}{\ps}\Iom\frac{|u|^{\ps}}{|y|^s}
\lan x.\na\va_\var\ran
\\
&=& -\frac{N-p}{p}\Iom \frac{|u|^{\ps}}{|y|^s}\va_\var\ \mbox{as}\ \var\to 0.
\end{eqnarray*}

\begin{eqnarray*}
&& \mbox{LHS of (\ref{nonex})}
\\
 && =\Iom |\na|^{p-2}\na u \na(\lan x,\na u\ran\va_\var)-\Idom|\na u|^{p-2}\frac{\pa
 u}{\pa\nu}\lan x,\na u\ran\va_\var ds
\\
&&\ \ \ \ [\ \mbox{Since u is in}\ W^{2,p}(\Ome) \ \mbox{and $\nu$ is the normal vector to}\
\pa\Ome\ ]
\\
&& = \Iom |\na u|^p\va_\var + \frac{1}{2}\Iom|\na u|^{p-2}\langle\na|\na u|^2.x\rangle\va_\var+
\\
&&\ \Iom |\na u|^{p-2}\langle x,\na u\rangle\langle\na u,\na\va_\var
\rangle-\Idom|\na u|^{p-2}(\frac{\pa u}{\pa\nu})^2(x.\nu)\va_\var ds
\\
&&\ \ [\ \mbox{Applying the fact}\ x.\na u=\langle x,\nu\rangle\frac{\pa u}{\pa \nu}
\mbox{and}\ u\in C^1(\pa\Ome\setminus\{y=0\})]
\end{eqnarray*}
Now we note that the last term on the RHS tends to $\Idom |\na u|^p\langle x,\nu\rangle ds$
as $\var\to 0$. Similarly 1st term on the RHS $\rightarrow\Iom|u|^p$ as $\var\to 0$. So
we need to estimate the 2nd and 3rd terms of the RHS.
\begin{eqnarray*}
 \mbox{3rd term} &=& \Iom |\na u|^{p-2}\lan x,\na u\ran\lan\na u,\na\va_\var\ran
\\
&\leq & \int_{\Ome\cap\{\var\leq |y|\leq 2\var\}}|\na u|^{p-2}|x.\na u|\frac{|\na u||\va'|}{\var}
\\
&\leq & C \int_{\Ome\cap\{\var\leq |y|\leq 2\var\}} |\na u|^{p-1}\frac{|\na u|}{\var}
\\
&\leq & C_1 \int_{\Ome\cap\{\var\leq |y|\leq 2\var\}}|\na u|^{p-1}\frac{|\na u|}{|y|}
\\
&\leq & C_1(\Iom \frac{|\na u|^p}{|y|^p})^{\frac{1}{p}}(\int_{\Ome\cap\{\var\leq |y|\leq 2\var\}}|\na u|^p)^\frac{p-1}{p}\longrightarrow 0 
\end{eqnarray*}
as $\var\to 0$. To estimate the 2nd term, first we notice that
\begin{displaymath}
 \frac{1}{p}\lan x,\na(|\na u|^2)^\frac{p}{2}\ran = \frac{1}{2}\lan x, \na(|\na u|^2)\ran
|\na u|^{p-2}.
\end{displaymath}
Therefore
\begin{eqnarray*}
&& \frac{1}{2}\Iom \lan x,\na(|\na u|^2)\ran|\na u|^{p-2}\va_\var
\\
&& = -\frac{N}{p}\Iom |\na u|^p\va_\var + \frac{1}{p}\Idom\va_\var|\na u|^p\lan x,\nu\ran ds
-\frac{1}{p}\Iom \lan x,\na\va_\var\ran |\na u|^p
\\
&& \rightarrow -\frac{N}{p}\Iom |\na u|^p + \frac{1}{p}\Idom |\na u|^p\lan x,\nu\ran ds
\ \ \ \mbox{as}\ \var\to 0.
\end{eqnarray*}
Therefore letting $\var\to 0$, we have from (\ref{nonex})
\begin{eqnarray*}
&& -\frac{N-p}{p}\Iom\frac{u^{\ps}}{|y|^s}=\Iom |\na u|^p-\frac{N}{p}\Iom |\na u|^p
+(\frac{1}{p}-1)\Idom \lan x,\nu\ran |\na u|^p ds
\\
&&\Rightarrow \Idom \lan x,\nu\ran |\na u|^p ds=0 \ \ \ [\mbox{since u is a solution of (\ref{A})}]
\\
&& \Rightarrow \na u=0 \ \mbox{a.e. in}\ \pa\Ome.
\end{eqnarray*}
Since $u$ is $C^1$ on $\pa\Ome\setminus\{y=0\}$ we have $\na u=0$ on $\pa\Ome\setminus\{y=0\}$. But this a contradition to Hopf's lemma (see Theorem \ref{vaz})
since $s<k(\frac{p-1}{p})<\frac{k}{2}$.

\end{proof}

\spa

\section{Cylindrical symmetry and the set of degeneracy}

In this section we study some properties of the set of degeneracy of solutions of (\ref{A})  by the method of symmetry under the
condition $t<\frac{k(p-1)}{Np}$. We have already seen that there exists a strict positive solution $u$ of (\ref{A}) for $t<\frac{k(p-1)}{Np}$ and $\Ome$ connected. 
Therefore in this section we assume $t<\frac{k(p-1)}{Np}$ and $\Ome$ connected.

Intuitively, we can expect that $u$ could be symmetric in the variable $y$ only about the point $0\in\Rk$. 
Without loss of generality, we assume that $\Ome$ is a smooth bounded domain which
is cylindrically symmetric about $0$. Let $u\in C^1(\bar\Ome)
$ be a solution of (\ref{A}) which is strictly positive in $\Ome$. Before we state the results, let us define
some notations. Let $\nu$ be a direction in $\Rn$, i.e. $\nu\in\Rn$ and $|\nu|=1$. For
any real number $\la$ we define
\begin{eqnarray*}
 T^\nu_\la &\df & \{x\in\Rn  :\ x.\nu=\la\}
\\
\Ome^\nu_\la &\df & \{x\in\Ome :\ x.\nu<\la\}
\\ 
x^\nu_\la &\df & R^\nu_\la(x)=x+2(\la-x.\nu)\nu, \ \ \ x\in\Rn.
\end{eqnarray*}
Therefore $R^\nu_\la(x)$ is the reflection of $x$ through the hyperplane $T^\nu_\la$. Define
\begin{displaymath}
 a(\nu)\df\inf_{x\in\Ome}x.\nu.
\end{displaymath}
If $\la>a(\nu)$ then $\Ome^\nu_\la$ is non-empty and so we set
\begin{displaymath}
 (\Ome^\nu_\la)^\prime\df R^\nu_\la(\Ome^\nu_\la).
\end{displaymath}
$\Ome$ being smooth we observe that $(\Ome^\nu_\la)'$ is contained in $\Ome$ for 
$\la-a(\nu)>0$ small and will remain in it , at least until one of the following holds:
\begin{enumerate}
\item[(i)] $(\Ome^\nu_\la)'$ becomes internally tangent to $\pa\Ome$ at some point not
on $T^\nu_\la$.
\item[(ii)] $T^\nu_\la$ is orthogonal to $\pa\Ome$ at some point.
\end{enumerate}
Let $\Lambda_1(\nu)$ be the set of those $\la>a(\nu)$ such that for each $\mu\in(a(\nu),\la)$
none of the conditions (i) and (ii) holds and define
\begin{displaymath}
 \la_1(\nu)\df \sup \Lambda_1(\nu).
\end{displaymath}
Define
\begin{displaymath}
 \La_2(\nu)\df \{\la>a(\nu) :\ (\Onm)'\subset\Ome\ \mbox{for any}\ \mu\in(a(\nu),\la]\} 
\end{displaymath}
and if $\La_2(\nu)\neq\emptyset$ then define
\begin{displaymath}
 \la_2(\nu)\df \sup\La_2(\nu).
\end{displaymath}
Since $\Ome$ is smooth we have $\emptyset\neq\La_1(\nu)\subset \La_2(\nu)$ .
\\[1mm]
For $a(\nu)<\la\leq\la_2(\nu)$ we define
\begin{displaymath}
 u^\la_\nu(x)\df u(x^\la_\nu)\ \ \mbox{for}\ \ x\in\Onl.
\end{displaymath}
Since $u\in C^1(\Ome)$ we can also define
\begin{eqnarray*}
 Z^\nu_\la\df Z^\nu_\la(u) &=& \{x\in\Onl :\ Du(x)=Du^\nu_\la(x)=0\}
\\
Z\df Z(u) &=& \{x\in\Ome :\ Du(x)=0\}.
\end{eqnarray*}
Finally we define
\begin{displaymath}
 \La_0(\nu)\df\{\la\in(a(\nu),\la_2(\nu)]\ :\ u\leq u^\nu_\mu\ \mbox{in}\ \Onm\ \mbox{for
 any}\ \mu\in(a(\nu),\la] \}.
\end{displaymath}
If $\La_0(\nu)\neq\emptyset$ we set
\begin{displaymath}
 \la_0(\nu)=\sup\La_0(\nu).
\end{displaymath}
Obviously we have $\la_0(\nu)\leq \la_2(\nu)$.\\
The main Theorem of this section is the following:

\begin{teor}\label{singular}
Let $\Ome$ be as above with $\pa\Ome$ orthogonal to the singular set
and $\la_1(\nu)=\la_2(\nu)=0$ for all $\nu=(\nu_1, 0)\in\Rk\times\Rnk$ with $|\nu_1|=1$. Then
\begin{displaymath}
\mathcal{H}^{k-1}(Z)=0\Rightarrow u\ \mbox{is symmetric in variable}\ y\ \mbox{about $0$ and}\ Z\subset\{y=0\}.
\end{displaymath}
\end{teor}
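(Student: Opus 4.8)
The plan is to apply the method of moving planes in every direction $\nu=(\nu_1,0)\in\Rk\times\Rnk$ with $|\nu_1|=1$, sliding the hyperplane $T^\nu_\la$ from $\la$ just above $a(\nu)$ toward the critical position $\la_2(\nu)=0$, using the weak comparison principle of Theorem \ref{WCP} as a substitute for the (unavailable) strong comparison principle. The structural fact that makes this possible is that the singular weight cooperates with the reflection as long as $\la<0$: writing $y^\nu_\la$ for the $y$-component of $x^\nu_\la$, a direct computation gives $|y^\nu_\la|^2-|y|^2=4\la(\la-y\cdot\nu_1)$, and since $x\in\Onl$ forces $y\cdot\nu_1<\la<0$ we get $|y^\nu_\la|\le|y|$, hence $|y^\nu_\la|^{-t}\ge|y|^{-t}$ on $\Onl$. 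Consequently the reflected function $u^\nu_\la(x):=u(x^\nu_\la)$ obeys $-\Delta_p u^\nu_\la=(u^\nu_\la)^{\ps-1}|y^\nu_\la|^{-t}\ge (u^\nu_\la)^{\ps-1}|y|^{-t}$, i.e. it is a supersolution of the very equation solved by $u$, with the same weight $|y|^{-t}$. Moreover, for $\la<0$ the condition $y\cdot\nu_1<\la$ keeps $\overline{\Onl}$ away from $\{y=0\}$, so on $\Onl$ the weight is smooth and the only degeneracy left is that of the $p$-Laplacian at critical points of $u$; this cleanly separates the singularity of the weight from the degeneracy of the operator and is the reason the planes are pushed only up to $\la=0$.

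First I would check $\La_0(\nu)\neq\emptyset$. For $\la-a(\nu)>0$ small, $|\Onl|$ is small, on $\pa\Onl$ one has $u\le u^\nu_\la$ (equality on $T^\nu_\la$, and $u=0\le u^\nu_\la$ on $\pa\Ome\cap\pa\Onl$ because $(\Onl)'\subset\Ome$), so Theorem \ref{WCP} on the small domain $\Onl$ yields $u\le u^\nu_\la$ there. The core of the argument is to show $\la_0(\nu)=\la_2(\nu)=0$. Suppose not, so $\la_0:=\la_0(\nu)<0$; by continuity $u\le u^\nu_{\la_0}$ in $\Omega^\nu_{\la_0}$, and I want to advance to $\la_0+\eps$. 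One decomposes $\Omega^\nu_{\la_0+\eps}$ into a compact part, on which a uniform gap $u<u^\nu_{\la_0}$ persists for small $\eps$, and a thin region of small measure handled by Theorem \ref{WCP}; on the boundary the inequality $u\le u^\nu_{\la_0+\eps}$ holds as before. The main obstacle is producing the strict inequality on the compact core: for $1<p<2$ no strong comparison principle is available across the critical set, and this is exactly where $\mathcal{H}^{k-1}(Z)=0$ is used. Since $Z^\nu_{\la_0}\subset Z$ we have $\mathcal{H}^{k-1}(Z^\nu_{\la_0})=0$, so the set where both $Du$ and $Du^\nu_{\la_0}$ vanish can be enclosed in an open set of arbitrarily small Lebesgue measure; outside it the operator is nondegenerate and a local strong comparison gives the gap, while the enclosing set together with the boundary layer has small measure and is absorbed by Theorem \ref{WCP}. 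This contradicts the maximality of $\la_0$ and forces $\la_0(\nu)=0$, hence $u\le u^\nu_0$ in $\Omega^\nu_0$ for every admissible $\nu$.

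It remains to upgrade these one-sided inequalities to symmetry and to locate $Z$. Running the same argument for $-\nu$ (admissible since $\Ome$ and the hypotheses are symmetric about $0$) gives $u\le u^{-\nu}_0$ on $\{x\cdot\nu>0\}$; as $R^{-\nu}_0=R^\nu_0$, composing the two inequalities yields $u(x)=u(x^\nu_0)$ for all $x$, so $u$ is symmetric about each hyperplane $T^\nu_0$ through $0$. Since this holds for all $y$-directions $\nu$, each slice $u(\cdot,z)$ is invariant under every reflection of $\Rk$ fixing $0$, hence radial in $y$, and the inequalities $u\le u^\nu_\la$ for $\la\in(a(\nu),0)$ give monotonicity, so $u$ is symmetric (non-increasing) in the variable $y$ about $0$. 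Finally, writing $u(y,z)=\tilde u(|y|,z)$ we have $\na_y u=\tilde u_r\,y/|y|$ and $\na_z u=\na_z\tilde u$, both functions of $|y|$ only; thus if $Du=0$ at some $(y_0,z_0)$ with $y_0\neq0$, then $Du\equiv0$ on the entire sphere $\{|y|=|y_0|\}\times\{z_0\}$, which is $(k-1)$-dimensional with positive $\mathcal{H}^{k-1}$-measure (here $k\ge2$ is essential), contradicting $\mathcal{H}^{k-1}(Z)=0$. Hence $Z\subset\{y=0\}$, completing the proof.
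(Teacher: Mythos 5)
Your overall framework---moving planes in the directions $\nu=(\nu_1,0)$, the computation of Lemma \ref{YL} making $u^\nu_\la$ a supersolution of the equation with the unreflected weight $|y|^{-t}$, Theorem \ref{WCP} both to start the plane and to absorb thin regions, and the final reflection/radiality argument identifying $Z\subset\{y=0\}$ via spheres of positive $\mathcal{H}^{k-1}$-measure---is the same as the paper's. But the core continuation step, where you claim $\la_0(\nu)=\la_2(\nu)=0$, has a genuine gap. To advance past a supposed $\la_0:=\la_0(\nu)<0$ you need a uniform gap $u<u^\nu_{\la_0}$ on a compact core, and you assert this follows because, after enclosing $Z^\nu_{\la_0}$ in an open set of small Lebesgue measure, ``a local strong comparison gives the gap.'' That is not what the strong comparison principle (Theorem \ref{SCP}) yields: off the critical set it gives only a dichotomy---on each connected component of $\Omega^\nu_{\la_0}\setminus Z^\nu_{\la_0}$ either $u<u^\nu_{\la_0}$ or $u\equiv u^\nu_{\la_0}$. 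For $1<p<2$ the set $Z^\nu_{\la_0}$ can disconnect the cap, and a component on which $u\equiv u^\nu_{\la_0}$ may be entirely interior, surrounded by $Z^\nu_{\la_0}$ (so the usual contradiction with $u=0<u^\nu_{\la_0}$ on $\pa\Omega$ is unavailable). On such a component there is no gap at all, no matter how the compact core is chosen; enclosing the critical set in a small-measure neighborhood does nothing to exclude this. This equality case is exactly the difficulty that distinguishes the degenerate case from $p=2$, and your proposal never addresses it.

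A telltale sign of the gap is that your argument uses the hypothesis $\mathcal{H}^{k-1}(Z)=0$ only through its far weaker consequence that $Z$ is Lebesgue-null; if the argument were correct, the theorem would hold under that weaker assumption and the specific dimension $k-1$ would play no role. The paper uses the hypothesis at full strength and in a completely different way: assuming $\la_0(\nu)<\la_2(\nu)$, the (correctly run) advancing argument shows there must exist a connected component $C^\nu$ of $\Omega^\nu_{\la_0(\nu)}\setminus Z^\nu_{\la_0(\nu)}$ with $u\equiv u^\nu_{\la_0(\nu)}$ on it; then, following Damascelli--Pacella, one proves that $\la_0(\cdot)$ is continuous on the $(k-1)$-dimensional set of directions $\bar{I}_\de(\nu)=\{\mu=(\mu_1,0):\ |\mu_1|=1,\ |\mu_1-\nu_1|<\de\}$, and uses the equality components associated with nearby directions to manufacture a set $Z_1\subset Z$, on which $u$ is constant and $Du=0$, with $\mathcal{H}^{k-1}(Z_1)>0$---contradicting the hypothesis. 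This direction-varying construction, which is the actual reason the sharp assumption is $(k-1)$-dimensional Hausdorff measure, is absent from your proposal, so your claim that $\la_0(\nu)=0$ is unproven; the rest of your argument (both-sided inequalities via $\pm\nu$, radiality, monotonicity, and $Z\subset\{y=0\}$) is fine once that step is supplied.
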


\begin{rem}
It is easy to see that if $u$ is symmetric in variable $y$ about $0\in\Rk$ and $\mathcal{H}^{k-1}(Z)=0$ then $Z\subset\{y=0\}$.
Therefore it is enough to prove that $u$ is symmetric in variable $y$ about $0\in\Rk$ if $\mathcal{H}^{k-1}(Z)=0$.
\end{rem}

To prove the above theorem we crucially follow the method in \cite{DP} with suitable modification for our set up. Therefore we shall not provide
a detailed proof for the above theorem whereas we shall provide the steps involved to prove result by providing theorems
analogous those in \cite{DP}.
Before we proceed to prove the symmetry results, let us recall some theorems (valid for general open bounded domain $\Ome\subset\Rn$) which
we will use for our results.
\begin{teor}\label{vaz}(Strong Maximum Principle and Hopf's Lemma:)
Let $u\in C^1(\Ome)$ be such that $\Delta_pu\in L^2_{loc}(\Ome)$, $u\geq 0$ a.e. in $\Ome$,
$\Delta_pu\leq 0$ a.e. in $\Ome$. Then if $u$ does not vanish identically on $\Ome$ it is
positive everywhere in $\Ome$. Moreover, if $u\in C^1(\Ome\cup\{x_0\})$ for some $x_0\in\pa\Ome$ that satisfies an interior sphere condition and $u(x_0)=0$ then
\begin{displaymath}
 \frac{\pa u}{\pa n}>0,
\end{displaymath}
where $n$ is an interior normal at $x_0$.
\end{teor}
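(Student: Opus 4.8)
The plan is to prove this by the classical barrier-and-comparison method of V\'azquez, adapted to the degenerate operator $\Delta_p$. The two ingredients I would assemble first are: (a) the weak comparison principle for $-\Delta_p$, namely that if $-\Delta_p w\le -\Delta_p u$ weakly on an open set $U$ and $(w-u)^+\in W^{1,p}_0(U)$ then $w\le u$ in $U$; this follows by testing the two weak formulations against $(w-u)^+$ and using the strict monotonicity inequality $\lan|a|^{p-2}a-|b|^{p-2}b,\,a-b\ran>0$ already quoted in Section 2, which forces $\na w=\na u$ a.e.\ on $\{w>u\}$ and hence $|\{w>u\}|=0$. And (b) an explicit radial subsolution that vanishes with nonzero slope on a sphere.

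For (b) I would use the radial $p$-harmonic function on an annulus. Since $p<N$ throughout this paper, on $B_\rho(y)\setminus\overline{B_{\rho/2}(y)}$ with $r=|x-y|$ take
\[
 w(r)=c_1+c_2\,r^{\frac{p-N}{p-1}} .
\]
A direct computation gives $-\Delta_p w=0$ in the annulus, and the constants $c_1,c_2$ can be fixed so that $w=0$ on $\{r=\rho\}$ and $w=m$ on $\{r=\rho/2\}$ for any prescribed $m>0$; the resulting $w$ is radially decreasing, strictly positive in the open annulus, and satisfies $w'(\rho)<0$, i.e.\ it has a strictly positive inward normal derivative on the outer sphere. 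Being $p$-harmonic, $w$ is simultaneously a weak sub- and supersolution, so it is an admissible barrier.

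Next I would prove the interior statement by contradiction. Assume $u\ge 0$, $-\Delta_p u\ge 0$ weakly (which is exactly what $\Delta_p u\le 0$ a.e.\ together with $\Delta_p u\in L^2_{loc}(\Ome)$ provides), $u\not\equiv 0$, yet $u(x_1)=0$ at some interior $x_1$. The set $D=\{u>0\}$ is open, nonempty and $\ne\Ome$, so I pick $y\in D$ with $\mbox{dist}(y,\pa D)<\mbox{dist}(y,\pa\Ome)$ and let $B_\rho(y)$ be the largest ball about $y$ contained in $D$; then some $x_0\in\pa B_\rho(y)\cap\Ome$ has $u(x_0)=0$. On the compact inner sphere $\pa B_{\rho/2}(y)$ one has $u\ge m>0$. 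Building $w$ as above with these boundary values gives $u\ge w$ on the full annular boundary, and since $-\Delta_p u\ge 0=-\Delta_p w$, the comparison principle yields $u\ge w$ throughout the annulus. At $x_0$ we have $u(x_0)=w(x_0)=0$ with $u\ge w$, so the inward normal derivative satisfies $\frac{\pa u}{\pa n}(x_0)\ge\frac{\pa w}{\pa n}(x_0)>0$. But $u\in C^1$ attains its minimum $0$ at the interior point $x_0$, forcing $\na u(x_0)=0$, a contradiction. Hence $u>0$ everywhere.

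Finally, Hopf's lemma follows from the very same construction: given $x_0\in\pa\Ome$ with the interior sphere $B_\rho(y)\subset\Ome$ tangent at $x_0$ and $u(x_0)=0$, the positivity of $u$ in $\Ome$ gives $u\ge m>0$ on $\pa B_{\rho/2}(y)$, the comparison principle again yields $u\ge w$ in the annulus, and comparing normal derivatives at $x_0$ gives $\frac{\pa u}{\pa n}(x_0)\ge\frac{\pa w}{\pa n}(x_0)>0$. The main obstacle, and the only place where degeneracy genuinely enters, is ingredient (a): since $\Delta_p$ is not uniformly elliptic one cannot invoke a linear maximum principle, so the comparison must be run entirely through the monotonicity of the operator and the admissibility of $(w-u)^+$ as a test function, which is precisely where the hypothesis $\Delta_p u\in L^2_{loc}(\Ome)$ is used to legitimize the weak formulation near the degeneracy set $\{\na u=0\}$.
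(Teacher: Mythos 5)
Your proof is correct, but be aware that the paper offers no proof of this statement at all: Theorem \ref{vaz} is quoted as a special case of Theorem 5 in V\'azquez \cite{V}, so the real comparison is between your self-contained argument and that citation. V\'azquez's theorem handles the more general inequality $\Delta_p u\le\beta(u)$ under an Osgood-type integral condition on $\beta$, which forces a more delicate, ODE-built barrier; the case the paper actually invokes is $\beta\equiv 0$, and exactly there your classical construction suffices: the weak comparison principle obtained by testing the two weak formulations against $(w-u)^+$ and using strict monotonicity of $a\mapsto|a|^{p-2}a$, combined with the explicit radial $p$-harmonic barrier $w=c_1+c_2\,r^{(p-N)/(p-1)}$ on an annulus (legitimate here since $p<N$ throughout the paper), yields both the positivity statement and the Hopf boundary-point lemma. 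The citation buys the authors generality and brevity; your route buys transparency and independence from the reference, and it proves precisely what the paper uses (the contradiction with $\nabla u=0$ on $\partial\Omega\setminus\{y=0\}$ in Section 4). Two details you should make explicit. First, the positivity assertion requires $\Omega$ connected (implicit in the word ``domain''): your choice of $y\in D$ with $\mbox{dist}(y,\partial D)<\mbox{dist}(y,\partial\Omega)$ presupposes $\partial D\cap\Omega\neq\emptyset$, which is false for disconnected $\Omega$ and is exactly where connectedness enters. Second, your closing remark slightly misattributes the role of the hypothesis $\Delta_p u\in L^2_{loc}(\Omega)$: it has nothing to do with the degeneracy set $\{\nabla u=0\}$; it is simply what makes the pointwise condition ``$\Delta_p u\le 0$ a.e.'' meaningful and equivalent to the weak supersolution inequality $\int_\Omega|\nabla u|^{p-2}\nabla u\cdot\nabla\varphi\ge 0$ for all $0\le\varphi\in C^\infty_0(\Omega)$, which (after extension by density to nonnegative $W^{1,p}_0$ test functions with bounded gradient support, using that $\nabla u$ is continuous up to the closed annulus, and up to $x_0$ by the $C^1(\Omega\cup\{x_0\})$ assumption in the boundary case) is the inequality your comparison actually tests.
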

This theorem is a special case of Theorem 5 in \cite{V}.

\spa

\begin{teor}\label{har}(Harnack type comparison inequality:)\ Suppose $u,v$ satisfies
\begin{eqnarray}
 -\Delta_pu\leq-\Delta_pv\ \ \mbox{and}\ \ u\leq v\ \mbox{in}\ \Ome\label{har1}
\end{eqnarray}
where $u,v\in W^{1,\infty}_{loc}(\Ome)$. Suppose $\overline{B_{5\de}(x_0)}\subset\Ome$
and $\displaystyle\inf_{B_{5\de}(x_0)}(|Du|+|Dv|)>0$. Then for any positive $s<\frac{N}{N-2}$
we have
\begin{displaymath}
 \|v-u\|_{L^s(B_{2\de}(x_o))}\leq c\de^{\frac{N}{s}}\inf_{B_{\de}(x_0)}(v-u)
\end{displaymath}
where $c$ is constant depending on $N,p,s,\de$ and $m$ and $M$, where \\
$m=\displaystyle\inf_{B_{5\de}(x_0)}(|Du|+|Dv|)$, $M=\displaystyle\sup_{B_{5\de}(x_0)}(|Du|+|Dv|)$.
\end{teor}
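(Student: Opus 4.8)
The plan is to linearize the difference of the two $p$-Laplacians and reduce the statement to the classical weak Harnack inequality for a nonnegative supersolution of a uniformly elliptic divergence-form operator. Set $w=v-u$, so that $w\geq 0$ in $\Ome$ by hypothesis. Read weakly, the inequality $-\Delta_p u\leq -\Delta_p v$ says that for every $\va\in C^\infty_0(B_{5\de}(x_0))$ with $\va\geq 0$,
\begin{displaymath}
\int (|Dv|^{p-2}Dv-|Du|^{p-2}Du)\cdot D\va\geq 0,
\end{displaymath}
so $w$ is a weak supersolution of whatever linear operator we can extract from the vector field $|Dv|^{p-2}Dv-|Du|^{p-2}Du$. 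To produce that operator I would use the elementary identity
\begin{displaymath}
|Dv|^{p-2}Dv-|Du|^{p-2}Du=\Big(\int_0^1\big[|\xi_t|^{p-2}I+(p-2)|\xi_t|^{p-4}\xi_t\otimes\xi_t\big]\,dt\Big)Dw=:A(x)\,Dw,
\end{displaymath}
where $\xi_t=Du+t(Dv-Du)$ traces the segment joining $Du(x)$ and $Dv(x)$. Substituting this into the weak inequality shows that $w\geq 0$ satisfies $-\mbox{div}(A(x)Dw)\geq 0$ weakly in $B_{5\de}(x_0)$, with $A$ a symmetric, measurable matrix field. Since $u,v\in W^{1,\infty}_{loc}$ we have $w\in W^{1,\infty}_{loc}\subset W^{1,2}(B_{5\de}(x_0))$, the regularity needed below.

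The heart of the proof, and the step I expect to be the main obstacle, is to show that $A(x)$ is uniformly elliptic on $B_{5\de}(x_0)$ with constants depending only on $N,p,m,M$. Diagonalizing the integrand gives, for any unit vector $\eta$,
\begin{displaymath}
(p-1)\int_0^1|\xi_t|^{p-2}\,dt\ \leq\ \eta^{T}A(x)\eta\ \leq\ \int_0^1|\xi_t|^{p-2}\,dt,
\end{displaymath}
since the bracketed matrix has eigenvalues $|\xi_t|^{p-2}$ and $(p-1)|\xi_t|^{p-2}$ and $1<p<2$. Because $|\xi_t|\leq\max(|Du|,|Dv|)\leq M$ and $p-2<0$, the left integral is bounded below by $(p-1)M^{p-2}>0$, which yields uniform ellipticity from below. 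The delicate point is the upper bound: in the singular regime $1<p<2$ the integrand $|\xi_t|^{p-2}$ blows up wherever the segment passes near the origin. This is exactly where the hypothesis $m=\inf_{B_{5\de}(x_0)}(|Du|+|Dv|)>0$ is used: at each point at least one of $Du,Dv$ has length $\geq m/2$, and since $p>1$ the one-dimensional integral $\int_0^1|\xi_t|^{p-2}\,dt$ converges even through the origin. Parametrizing the segment by signed arclength from the foot of the perpendicular to the line it spans, and using the subadditivity $a^{p-1}+b^{p-1}\leq 2(a+b)^{p-1}$ valid for $0<p-1<1$, one bounds $\int_0^1|\xi_t|^{p-2}\,dt\leq C(p)\,m^{p-2}$ (the worst case being a segment straddling the origin, whose total length is then $\geq m/2$; the non-straddling case follows from $\ell_b^{p-1}-\ell_a^{p-1}\leq(\ell_b-\ell_a)^{p-1}$). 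Hence the eigenvalues of $A(x)$ lie in a fixed interval $[\la_0,\La_0]\subset(0,\infty)$ determined by $m,M,p$.

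With uniform ellipticity in hand, $w\geq 0$ is a nonnegative weak supersolution of a divergence-form operator with bounded measurable, uniformly elliptic coefficients on the ball $B_{5\de}(x_0)\supset\overline{B_{4\de}(x_0)}$. Applying the classical weak Harnack inequality of De Giorgi--Nash--Moser type (as in Gilbarg--Trudinger, Theorem 8.18), which holds precisely for exponents $0<s<\frac{N}{N-2}$, on $B_{4\de}(x_0)$ yields
\begin{displaymath}
\Big(\frac{1}{|B_{2\de}|}\int_{B_{2\de}(x_0)}(v-u)^{s}\Big)^{1/s}\leq C\inf_{B_\de(x_0)}(v-u),
\end{displaymath}
with $C$ depending on $N,s$ and the ellipticity ratio $\La_0/\la_0$, hence on $N,p,s,m,M$. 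Absorbing $|B_{2\de}|^{1/s}=c_N(2\de)^{N/s}$ into the right-hand side produces the stated inequality $\|v-u\|_{L^{s}(B_{2\de}(x_0))}\leq c\,\de^{N/s}\inf_{B_\de(x_0)}(v-u)$ and accounts for the $\de$-dependence of the constant. The only genuinely delicate ingredient is the uniform upper bound on $\int_0^1|\xi_t|^{p-2}\,dt$, i.e. the singular ellipticity estimate, which is where the gradient non-degeneracy hypothesis $m>0$ enters decisively.
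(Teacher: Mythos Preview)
Your argument is correct. The paper itself does not prove this statement---it simply records that the result ``is a particular case of Theorem~1.3 in \cite{LD}'' (Damascelli, 1998)---and the proof in that reference is exactly the linearization-then-weak-Harnack route you describe: write $|Dv|^{p-2}Dv-|Du|^{p-2}Du=A(x)D(v-u)$ via the integral identity, use the hypothesis $m>0$ together with $p-1>0$ to bound $\int_0^1|\xi_t|^{p-2}\,dt\leq C(p)\,m^{p-2}$ and thereby obtain uniform ellipticity of $A$ on $B_{5\de}(x_0)$, and then apply the classical weak Harnack inequality to the nonnegative supersolution $w=v-u$.
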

It's a particular case of Theorem 1.3 in \cite{LD}. As a consequence of this theorem one
can have the following Strong Comparison Principle whose proof can be found in \cite{LD} 

\spa

\begin{teor}\label{SCP}
(Strong Comparison Principle:) Define $Z^v_u=\{x\in\Ome :\ Du(x)=Dv(x)=0\}$.
 Let $u,v\in C^1(\bar{\Ome})$ be such that it satisfy (\ref{har1})
and there exists $x_0\in\Ome\setminus Z^v_u$ with $u(x_0)=v(x_0)$,
then $u\equiv v$ in the connected component of $\Ome\setminus Z^v_u$ containing $x_0$.
\end{teor}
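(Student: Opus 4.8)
The plan is to run a connectedness argument on the set where $u$ and $v$ coincide, using the Harnack-type comparison inequality of Theorem~\ref{har} to force openness. Set $w\df v-u$, so that $w\geq 0$ in $\Ome$ by (\ref{har1}) and $w\in C^1(\bar\Ome)$ (in particular $u,v\in W^{1,\infty}_{loc}(\Ome)$, as required to invoke Theorem~\ref{har}). Since $u,v\in C^1$, the function $x\mapsto |Du(x)|+|Dv(x)|$ is continuous, so $Z^v_u$ is closed and $\Ome\setminus Z^v_u$ is open. Let $\mathcal{C}$ denote the connected component of $\Ome\setminus Z^v_u$ containing $x_0$; it is open and connected. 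I would then study the coincidence set
\begin{displaymath}
A\df\{x\in\mathcal{C}\ :\ w(x)=0\}
\end{displaymath}
and show $A=\mathcal{C}$, which is exactly the assertion $u\equiv v$ on $\mathcal{C}$.

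First, $A\neq\emptyset$ since $x_0\in A$ by hypothesis, and $A$ is relatively closed in $\mathcal{C}$ because $w$ is continuous. The heart of the matter is to show $A$ is relatively open in $\mathcal{C}$. Fix $x_1\in A$. Since $x_1\in\mathcal{C}\subset\Ome\setminus Z^v_u$ we have $|Du(x_1)|+|Dv(x_1)|>0$, so by continuity of $|Du|+|Dv|$ together with the openness of $\mathcal{C}$ one can choose $\de>0$ small enough that simultaneously $\overline{B_{5\de}(x_1)}\subset\mathcal{C}$ and $\inf_{B_{5\de}(x_1)}(|Du|+|Dv|)>0$. These are precisely the hypotheses of Theorem~\ref{har}, so fixing any $0<s<\frac{N}{N-2}$ we obtain
\begin{displaymath}
\|w\|_{L^s(B_{2\de}(x_1))}\leq c\,\de^{\frac{N}{s}}\inf_{B_\de(x_1)}w.
\end{displaymath}
Because $x_1\in B_\de(x_1)$ with $w(x_1)=0$ and $w\geq 0$, the infimum on the right equals $0$; hence $w\equiv 0$ a.e.\ on $B_{2\de}(x_1)$, and by continuity $w\equiv 0$ on all of $B_{2\de}(x_1)$. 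Since $B_{2\de}(x_1)\subset\mathcal{C}$, this gives $B_{2\de}(x_1)\subset A$, proving that $A$ is open.

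Since $\mathcal{C}$ is connected and $A$ is a nonempty subset that is both relatively open and relatively closed, $A=\mathcal{C}$, i.e.\ $u\equiv v$ on the component of $\Ome\setminus Z^v_u$ containing $x_0$. The main obstacle is the openness step, and within it the only delicate point is verifying that the two requirements of Theorem~\ref{har}---the inclusion $\overline{B_{5\de}(x_1)}\subset\Ome$ and the nonvanishing of $|Du|+|Dv|$ on $B_{5\de}(x_1)$---hold for one common radius $\de$; this is guaranteed by the $C^1$ regularity of $u,v$ and by $\mathcal{C}$ being an open subset of $\Ome$ on which the gradients never vanish simultaneously. Once these hypotheses are secured, the quantitative comparison estimate collapses $w$ to zero on an entire ball out of its single vanishing at $x_1$, which is exactly the mechanism that propagates the equality across the whole component.
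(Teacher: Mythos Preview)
Your argument is correct and is precisely the standard connectedness argument underlying this result: the paper does not give its own proof here but simply refers to \cite{LD}, and the proof there proceeds exactly as you do, using the Harnack-type comparison inequality (Theorem~\ref{har}) to show that the coincidence set is open in each component of $\Ome\setminus Z^v_u$. The only minor remark is that once you have $\overline{B_{5\de}(x_1)}\subset\mathcal{C}$, the gradient lower bound $\inf_{B_{5\de}(x_1)}(|Du|+|Dv|)>0$ is automatic by compactness, so the two conditions need not be secured separately.
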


Now let us start with a technical lemma.
\begin{lemma}\label{YL}
 Let $\nu\in\Rn$ such that $\nu=(\nu_1,0)$ where $\nu_1\in\Rk$ and $|\nu_1|=1$. If $\la\leq0$
and $x^\nu_\la=(y^\nu_\la,z^\nu_\la)$ then $|y|\geq |y^\nu_\la|$ on $\Ome^\nu_\la$.
\end{lemma}

\begin{proof}
 We note that $y^\nu_\la=y+ 2(\la-y.\nu_1)\nu_1$. Therefore
\begin{eqnarray*}
 |y^\nu_\la|^2-|y|^2 &=&  4(\la-y.\nu_1)\nu_1.y + 4(\la-y.\nu_1)^2
\\
&=& 4(\la-y.\nu_1)[\nu_1.y+\la-y.\nu_1]
\\
&=& 4(\la-x.\nu)\la \leq 0.
\end{eqnarray*}

\end{proof}

\spa

Suppose $\Ome$ is a bounded domain in $\Rn$ with $\pa\Ome$ is orthogonal to the singular set. We know that for $\la\leq\la_2(\nu)$, $u\in C^1(\Onl)$ weakly solves
\begin{eqnarray}
 -\Delta_p u = \frac{u^{p^*(s)-1}}{|y|^t} \  \  \mbox {in}\ \Onl.\label{eqn11}
\end{eqnarray}
Clearly $\unl\in C^1(\Onl)$ weakly solves 
\begin{eqnarray}
 -\Delta_p \unl = \frac{{\unl}^{p^*(s)-1}}{|\ynl|^t} \  \   \mbox {in}\ \Onl.\label{eqn12}
\end{eqnarray}
where $\nu$ is as in Lemma \ref{YL}. For any set $A\subset\Onl$ we define
\begin{eqnarray*}
 M_A=M_A(u,\unl)=\sup_A(|Du|+|D{\unl}|)
\end{eqnarray*}
and we denote by $|A|$ its Lebesgue measure. Next we prove the following Weak Comparison Principle which is in the heart of the proof of Theorem \ref{singular}.

\spa

\begin{teor}\label{WCP}(Weak Comparison Principle:)
Suppose that $1<p<2$ and $\nu=(\nu_1,0)$, then for any $\la\leq 0$ there exists
$\de_1,\ \de_2>0$, depending on $p,\ |\Ome|,\ M_\infty$($M_\infty=2\sup_\Omega|Du|$) and the $L^\infty$ norm of $u$ such that:
if an open set $\Ome'\subset \Onl$ satisfies $\Ome'=A_1\cup A_2, |A_1\cap A_2|=0, |A_1|<\de_1,
M_{A_2}<\de_2$ then $u\leq\unl$ on $\pa\Ome'$ implies $u\leq\unl$ in $\Ome'$.
\end{teor}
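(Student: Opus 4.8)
The plan is to adapt the weak comparison principle of Damascelli--Pacella \cite{DP} to the present degenerate and partially singular setting, the only genuinely new feature being the singular weights $|y|^{-t}$ and $|\ynl|^{-t}$, which will be handled by Lemma \ref{YL}. Since $u\leq\unl$ on $\pa\Ome'$, the function $w^+:=(u-\unl)^+$ lies in $W^{1,p}_0(\Ome')$ and is an admissible test function. Subtracting the weak formulations of (\ref{eqn11}) and (\ref{eqn12}) and testing against $w^+$ yields
\begin{equation*}
\int_{\Ome'}\lan|\na u|^{p-2}\na u-|\na\unl|^{p-2}\na\unl,\na w^+\ran=\int_{\Ome'}\Big(\frac{u^{\ps-1}}{|y|^t}-\frac{(\unl)^{\ps-1}}{|\ynl|^t}\Big)w^+.
\end{equation*}
For the left-hand side I would invoke the same elementary inequality already used in the proof of Theorem 2.1, namely $\lan|a|^{p-2}a-|b|^{p-2}b,a-b\ran\geq C(p)(|a|+|b|)^{p-2}|a-b|^2$, to bound it below by $C(p)\,I$, where
\begin{equation*}
I:=\int_{\{w>0\}}(|\na u|+|\na\unl|)^{p-2}|\na w^+|^2.
\end{equation*}

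For the right-hand side, the key point is that the unfavorable weight can be discarded with the correct sign. Writing the integrand as $\frac{u^{\ps-1}-(\unl)^{\ps-1}}{|y|^t}+(\unl)^{\ps-1}\big(\frac{1}{|y|^t}-\frac{1}{|\ynl|^t}\big)$, Lemma \ref{YL} gives $|\ynl|\leq|y|$ on $\Onl$, so $\frac{1}{|y|^t}-\frac{1}{|\ynl|^t}\leq0$ and the second term is nonpositive on $\{w>0\}$ and may be dropped. On $\{w>0\}$ we have $0\leq\unl<u\leq\|u\|_\infty$, so by the mean value theorem together with $u\in L^\infty(\Ome)$ one gets $u^{\ps-1}-(\unl)^{\ps-1}\leq C(\|u\|_\infty)\,w^+$; hence the right-hand side is at most $C_1\int_{\Ome'}\frac{(w^+)^2}{|y|^t}$. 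Combining Hölder's inequality with the Hardy--Sobolev inequality (\ref{HSI}) and using that $|y|^{-t}$ is integrable on $\Ome$ (as $t<k$), I would then bound $\int_{\Ome'}\frac{(w^+)^2}{|y|^t}\leq C_2\big(\int_{\Ome'}|\na w^+|^p\big)^{2/p}$.

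It remains to absorb this term into $I$. By Hölder with conjugate exponents $2/p$ and $2/(2-p)$,
\begin{equation*}
\int_{\Ome'}|\na w^+|^p\leq I^{p/2}\Big(\int_{\Ome'}(|\na u|+|\na\unl|)^p\Big)^{(2-p)/2},
\end{equation*}
and splitting the last integral over $A_1$ (where $|A_1|<\de_1$ and $|\na u|+|\na\unl|\leq M_\infty$) and $A_2$ (where $|\na u|+|\na\unl|\leq M_{A_2}<\de_2$) bounds it by $(M_\infty^p\de_1+\de_2^p|\Ome|)^{(2-p)/2}$. Chaining the three estimates gives $C(p)\,I\leq C_1C_2\,(M_\infty^p\de_1+\de_2^p|\Ome|)^{(2-p)/p}\,I$; choosing $\de_1,\de_2$ small enough that the constant on the right is strictly less than $C(p)$ forces $I=0$, whence $\na w^+=0$ a.e. and, since $w^+\in W^{1,p}_0(\Ome')$, $w^+\equiv0$, i.e. $u\leq\unl$ in $\Ome'$.

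The main obstacle is exactly the interplay between the partial singularity and the degeneracy. The reflected weight $|\ynl|^{-t}$ is \emph{more} singular than $|y|^{-t}$, so it cannot be bounded directly, and Lemma \ref{YL} is precisely what lets the offending term be discarded with the right sign. The second difficulty, and the heart of the argument, is to absorb the weighted quadratic $\int_{\Ome'}(w^+)^2/|y|^t$ back into the degenerate energy $I$: this requires the simultaneous use of the Hardy--Sobolev inequality, the $L^\infty$ bound on $u$, and the Hölder interpolation, and the bookkeeping must keep the power of $I$ matched so that the smallness of $\de_1,\de_2$ can close the estimate.
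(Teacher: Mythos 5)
Your proposal is correct and reaches the stated conclusion with $\de_1,\de_2$ depending on the allowed quantities, and it shares the paper's skeleton: the same test function (the paper uses $(\unl-u)^-$, you use $w^+=(u-\unl)^+$), the inequality $\lan|a|^{p-2}a-|b|^{p-2}b,a-b\ran\geq C(p)(|a|+|b|)^{p-2}|a-b|^2$, Lemma \ref{YL} to discard the reflected weight with the correct sign, and the $L^\infty$ bound on $u$ to control the difference quotient of $r\mapsto r^{\ps-1}$. Where you genuinely diverge is the absorption step. The paper stays in weighted $L^2$: it keeps the lower bounds $M_\infty^{p-2}$ on $A_1$ and $M_{A_2}^{p-2}$ on $A_2$ (so that smallness of $M_{A_2}$ makes the left-hand weight \emph{large}, since $p-2<0$), splits the singular weight as $|y|^{-t}\leq\var^{-t}$ on $\{|y|\geq\var\}$ and $|y|^{-t}\leq\var^{2-t}|y|^{-2}$ on $\{|y|<\var\}$, controls the first piece by the Poincar\'e-type Lemma 2.2 of \cite{LD} (which produces the crucial factor $|A_1|^{1/N}$) and the second by the $L^2$ Hardy inequality, and finally calibrates $\var$ against $M_\infty^{p-2}$ before choosing $\de_1,\de_2$; this route needs $(\unl-u)^-\in W^{1,2}_0(\Ome')$, i.e. it leans on $u\in C^1(\bar\Ome)$. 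You instead bound $\int_{\Ome'}(w^+)^2/|y|^t$ globally by $C_2\big(\int|\na w^+|^p\big)^{2/p}$ using H\"older (exponents $\ps/2$ and $\ps/(\ps-2)$), the Hardy--Sobolev inequality (\ref{HSI}), and $\int_\Ome|y|^{-t}<\infty$ (true since $t<k$), and then absorb via the interpolation $\int|\na w^+|^p\leq I^{p/2}\big(\int(|\na u|+|\na\unl|)^p\big)^{(2-p)/2}$, the hypotheses on $A_1,A_2$ entering only through $\int_{\Ome'}(|\na u|+|\na\unl|)^p\leq M_\infty^p\de_1+\de_2^p|\Ome|$. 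This is closer to the original argument of \cite{DP}, works entirely in the natural $W^{1,p}$ scale, and avoids both the $\var$-cutoff and the $L^2$ Hardy inequality. Two points you should make explicit: (i) concluding $I=0$ from $C(p)I\leq\theta I$ with $\theta<C(p)$ requires $I<\infty$, which holds because $(|\na u|+|\na\unl|)^{p-2}|\na w^+|^2\leq(|\na u|+|\na\unl|)^p\leq M_\infty^p$ and $\Ome$ is bounded; (ii) both your H\"older step and your mean value theorem step need $\ps\geq 2$ --- but the paper's claim that $A(x)=({\unl}^{\ps-1}-u^{\ps-1})/(\unl-u)$ is bounded by a constant depending on $\|u\|_{L^\infty(\Ome)}$ needs exactly the same, so this is a caveat shared with the paper's proof rather than a defect of your route.
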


\begin{proof}
Let $u\leq\unl$ on $\pa\Ome'$. Therefore $(\unl-u)^-=0$ on $\pa\Ome'$ and hence
$(\unl-u)^-\in W^{1,p}_0(\Ome')$. Define 
\begin{displaymath}
A(x)\df \frac{{\unl}^{\ps-1}-u^{\ps-1}}{\unl-u}.
\end{displaymath}

Using $(\unl-u)^-$ as a test function we have from 
(\ref{eqn11}) and (\ref{eqn12})
\begin{eqnarray*}
 &&\int_{\Ome'}(|\na\unl|^{p-2}\na\unl)-|\na u|^{p-2}\na u).\na(\unl-u)^-
\\
&&\geq\int_{\Ome'}A(x)\frac{(\unl-u)(\unl-u)^-}{|y|^t},\ \mbox{[using Lemma \ref{YL}]}
\end{eqnarray*}
implying
\begin{eqnarray*}
&&\int_{\Ome'\cap\{\unl\leq u\}}(|\na u|^{p-2}\na u-|\na\unl|^{p-2}\na\unl)).\na(\unl-u)^-
\\
&&\leq \int_{\Ome'\cap\{\unl\leq u\}}A(x)\frac{|(\unl-u)^-|^2}{|y|^t},
\end{eqnarray*}
and hence
\begin{eqnarray}
 \int_{\Ome'\cap\{\unl\leq u\}}(|\na\unl|+|\na u|)^{p-2}|\na(\unl-u)^-|^2\leq
 M\int_{\Ome'\cap\{\unl\leq u\}}\frac{|(\unl-u)^-|^2}{|y|^t}\label{eqn13}
\end{eqnarray}
where $M$ denotes the upper bound of $A(\cdot)$ and so depends on $\|u\|_{L^\infty(\Ome)}$.
Since $1<p<2$ we have from (\ref{eqn13})
\begin{eqnarray}
&& M_\infty^{p-2}\int_{A_1\cap\{\unl\leq u\}}|\na(\unl-u)|^2+M_{A_2}^{p-2}\int_{A_2\cap\{\unl\leq u\}}
|\na(\unl-u)|^2\no
\\
&& \leq\frac{M}{\var^t}\int_{\Ome'\cap\{\unl\leq u\}}|(\unl-u)^-|^2+
\int_{\Ome'\cap\{\unl\leq u\}\cap\{|y|<\var\}}\frac{|(\unl-u)^-|^2|y|^{2-t}}{|y|^2}.\label{eqn14}
\end{eqnarray}
Since $u\in C^1(\bar\Ome)$ we have $(\unl-u)^-\in W^{1,2}_0(\Ome')$, therefore following Lemma 2.2 in \cite{LD} and Hardy Sobolev Inequality we have
\begin{eqnarray*}
 \mbox{RHS of (\ref{eqn14})} &\leq &\frac{2M}{\var^t}\frac{|\Ome|^{1/N}}{|\sigma_N|^{2/N}}
\Big[|A_1|^{1/N}|\na(\unl-u)|^2_{L^2(A_1\cap\{\unl\leq u\})}+
\\
&& |\Ome|^{1/N}|\na(\unl- u)|^2_{L^2(A_2\cap\{\unl\leq u\})}\Big]
\\
&&+C\var^{2-t}\int_{A_1\cap\{\unl\leq u\}}|\na(\unl-u)|^2
\\
&&+C\var^{2-t}\int_{A_2\cap\{\unl\leq u\}}|\na(\unl-u)|^2
\end{eqnarray*}
where $\sigma_N$ denotes the unit sphere in $\Rn$.
Therefore from (\ref{eqn14}) we get 
\begin{eqnarray}\label{eqn15}
&& \Big[M_{\infty}^{p-2}-\frac{2M|\Ome|^{1/N}|A_1|^{1/N}}{\var^t|\sigma_N|^{2/N}}-C\var^{2-t}\Big]
\int_{A_1\cap\{\unl\leq u\}}|\na(\unl-u)|^2 \no
\\
&& + \Big[M_{A_2}^{p-2}-\frac{2M|\Ome|^{2/N}}{\var^t|\sigma_N|^{2/N}}-C\var^{2-t}\Big]
\int_{A_2\cap\{\unl\leq u\}}|\na(\unl-u)|^2\leq 0.
\end{eqnarray}
Now choosing $C\var^{2-t}=\frac{M_\infty^{p-2}}{2}$, the term in the first bracket of (\ref{eqn15})
becomes
\begin{displaymath}
\frac{M_{\infty}^{p-2}}{2} -\frac{2M|\Ome|^{1/N}|A_1|^{1/N}}{C_1|\sigma_N|^{2/N}},
\end{displaymath}
with $C_1=\var^t=(\frac{M_\infty^{p-2}}{2C})^\frac{t}{2-t}$
and hence for $|A_1|\leq\de_1$, $\de_1>0$ to be sufficiently small, the above quantity
is positive. For the same choice of $\var$ the term in the second bracket of (\ref{eqn15})
becomes
\begin{displaymath}
 \frac{1}{M_{A_2}^{2-p}}-\Big(\frac{2M|\Ome|^{2/N}}{C_1|\sigma_N|^{2/N}}
+\frac{M_\infty^{p-2}}{2}\Big)
\end{displaymath}
and therefore we can choose $\de_2>0$ such that if $M_{A_2}<\de_2$ then above quantity
is positive. Therefore we have
\begin{displaymath}
 \int_{A_i\cap\{\unl\leq u\}}|\na(\unl-u)|^2= 0\ \ \mbox{for}\ i=1,2.
\end{displaymath}
This implies 
\begin{displaymath}
 \|(\unl-u)^-\|_{W^{1,2}_0(\Ome')}=0
\end{displaymath}
and hence $u\leq\unl$ in $\Ome'$. This completes the proof.
\end{proof}

Now we are in the position to prove Theorem \ref{singular}.
 Since the main idea of the proof comes from \cite{DP}
with some suitable modification for our set up, we would just provide a sketch for
the proof of Theorem \ref{singular}.
\\[2mm]

\noi{\bf Sketch of the proof of Theorem \ref{singular}:} \\
First we prove results analogous to Theorem 3.1 in \cite{DP} and then complete the proof of
Theorem \ref{singular} above along the lines at page 700--705 in \cite{DP}. 
To proceed first we'll show that $\la_0(\nu)=\la_2(\nu)$ for all $\nu=(\nu_1, 0)$ with $|\nu_1|=1$ which will prove $u\leq u_{\la_2(\nu)}^\nu$ in $\Ome_{\la_2(\nu)}^\nu$.
To get other side inequality observe that $v(x)\df u(x_{\la_2(\nu)}^\nu)$ satisfies an equation of same type in $\Ome$. 

\noi Step 1:\ Let $\nu=(\nu_1,0)$ and $\la_2(\nu)=0$. Since $u=0$ on $\pa\Ome$ and
$u=\unl$ on $\Ome\cap T^\nu_\la$, applying Theorem \ref{WCP} we have that $u\leq\unl$
in $\Onl$ for $\la-a(\nu)>0$ small. Therefore $\La_0(\nu)\neq\emptyset$. At this
point we observe that $u\leq u^\nu_{\la_0(\nu)}$ in $\Ome^\nu_{\la_0(\nu)}$, by continuity. Therefore 
\begin{displaymath}
 -\Delta_p u^\nu_{\la_0(\nu)}=\dfrac{{u^\nu_{\la_0(\nu)}}^{\ps-1}}{|(y)_{\la_0(\nu)}|^t}
\geq \dfrac{{u}^{\ps-1}}{|y|^t}=-\Delta_p u\ \ \mbox{in}\ \ \Ome^\nu_{\la_0(\nu)}.
\end{displaymath}
[Here we have used Lemma \ref{YL}].
So we can apply Theorem \ref{SCP}. Then following
the same arguments as in Step 2 of Theorem 3.1 in \cite{DP} we have, if $\la_0(\nu)<\la_2(\nu)$
then there exists at least one connected component $C^\nu$ of $\Ome^\nu_{\la_0(\nu)}
\setminus Z^\nu_{\la_0(\nu)}$ such that $u=u^\nu_{\la_0(\nu)}$ in $C^\nu$. Again
it can also be shown that for any $\la$ with $a(\nu)<\la<\la_0(\nu)$ we have
\begin{displaymath}
 u<\unl \ \ \mbox{in}\ \Onl\setminus Z^\nu_\la.
\end{displaymath}

\noi Step 2. Now we shall prove that
\begin{displaymath}
\dfrac{\pa u}{\pa \nu}(x)>0 \ \ \fa\ x\in \ \Ome^\nu_{\la_0(\nu)}\setminus Z
\end{displaymath}
which will prove the non-increasing property of $u$.
For simplicity we assume $\nu=e_1=(1,\ldots,0)$. Let $x=(\la,\varsigma)\in\Ome^\nu_{\la_0(\nu)}\setminus Z$, i.e., $\la<\la_0(\nu)$ and
$Du(x)\neq 0$. Therefore we can choose a ball $B_r(x)$ such that $|Du|,\ |D\unl|\geq\var>0$
in $B_r(x)\cap\Onl$ for some $\var$ small enough and $B_r(x)\cap\{y=0\}=\emptyset$. Then
by standard results $u\in C^2(B_r(x))$ and the difference $\unl-u$ satisfies a linear
strictly elliptic equation $L(\unl-u)=0$ (cf. \cite{S}). On the other hand we have 
$\unl-u>0$ in $B_r(x)\cap\Onl$ while $\unl=u$ on $T^\nu_\la$. Hence, by Hopf's lemma
(cf. \cite{S}) we get $0>\frac{\pa(\unl-u)}{\pa x_1}(x)=-2\frac{\pa u}{\pa x_1}(x)$ i.e.,
$\frac{\pa u}{\pa x_1}>0$.
\\[2mm]
\noi Step 3. Till now
we have not used the fact that $\mathcal{H}^{k-1}(Z)=0$. To complete the proof it is enough to prove that $\la_0(\nu)=\la_2(\nu)$.
We shall prove this by contradiction. Therefore assume
$\la_0(\nu)<\la_2(\nu)$. Let
\begin{displaymath}
\bar{I}_{\de}(\nu)\df\{\mu\in\Rn : \ \mu=(\mu_1,0), \mu_1\in\Rk, |\mu_1|=1, |\mu_1-\nu_1|<\de\}.
\end{displaymath}
Therefore $\bar{I}_\de(\nu)$ is an $(k-1)$-dimensional open set embedded in the $(N-1)$-dimensional sphere. Using Theorem \ref{WCP}
and arguments at page 702--703 in \cite{DP} we can show that $\la_0(\cdot)$ is continuous on $\bar{I}_\de(\nu)$. Therefore following
the arguments similar to those at page 703--705 in \cite{DP} we can have a set $Z_1\subset Z$(in fact, on $Z_1$, $u$=const., $Du=0$) such that 
$\mathcal{H}^{k-1}(Z_1)>0$. This is contradicting to our hypothesis. This completes the proof of Theorem \ref{singular}. \hfill $\Box$

\spa

Note that we can also prove the weak maximum principle (Theorem \ref{WCP}) for the directions of type $\nu=(0,\nu_2)$ with $\nu_2\in\Rnk$ with 
$|\nu_2|=1$. Therefore following the same arguments as above we have the following theorem.

\begin{teor}
Let $\Ome$ be as above with $\pa\Ome$ orthogonal to the singular set
and $\la_1(\nu)=\la_2(\nu)=0$ for all $\nu=(0, \nu_2)\in\Rk\times\Rnk$ with $|\nu_2|=1$. Then
\begin{displaymath}
\mathcal{H}^{N-k-1}(Z)=0\Rightarrow u\ \mbox{is symmetric in variable}\ z\ \mbox{about $0$ and}\ Z\subset\{z=0\}.
\end{displaymath}
\end{teor}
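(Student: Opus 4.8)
The plan is to mirror, variable-for-variable, the three-step argument behind Theorem \ref{singular}, with the $z$-variable now playing the role that $y$ played there. The first ingredient is the analogue of Lemma \ref{YL}. For a direction $\nu=(0,\nu_2)$ with $\nu_2\in\Rnk$ and $|\nu_2|=1$, the reflection $R^\nu_\la$ acts only on the $z$-coordinates and leaves the $y$-block fixed; hence $y^\nu_\la=y$ and in particular $|y|=|y^\nu_\la|$ \emph{identically} on $\Onl$. This is stronger than the inequality of Lemma \ref{YL}, and it shows that the singular set $\{y=0\}$ is invariant under $R^\nu_\la$.

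Next I would establish the Weak Comparison Principle (Theorem \ref{WCP}) for these directions, as announced in the remark preceding the statement. The argument is the one used for the $y$-directions: testing the weak formulations (\ref{eqn11}) and (\ref{eqn12}) against $(\unl-u)^-$ and using $|y|=|y^\nu_\la|$ reduces the right-hand side to a multiple of $\int_{\Ome'\cap\{\unl\leq u\}}|(\unl-u)^-|^2/|y|^t$ as in (\ref{eqn13}); the weight $1/|y|^t$ is then absorbed by splitting $\Ome'$ into its part near $\{y=0\}$ and its complement and applying the Hardy-Sobolev inequality, exactly as in (\ref{eqn14})-(\ref{eqn15}). The smallness thresholds $\de_1,\de_2$ are chosen verbatim.

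With Theorem \ref{WCP} available I would run Steps 1-3 of the sketch of proof of Theorem \ref{singular}. Step 1 gives $\La_0(\nu)\neq\emptyset$ and, combining $u\leq u^\nu_{\la_0(\nu)}$ with the Strong Comparison Principle (Theorem \ref{SCP}) and the identity $|y|=|y^\nu_\la|$, yields $u<\unl$ on $\Onl\setminus Z^\nu_\la$ for $a(\nu)<\la<\la_0(\nu)$. Step 2 produces the monotonicity $\frac{\pa u}{\pa\nu}>0$ on $\Ome^\nu_{\la_0(\nu)}\setminus Z$ by restricting to balls disjoint from $\{y=0\}$ on which the equation is uniformly elliptic, so that $\unl-u$ solves a linear strictly elliptic equation to which the classical Hopf lemma (cf. \cite{S}) applies. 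Step 3 is the contradiction: if $\la_0(\nu)<\la_2(\nu)$, the continuity of $\la_0(\cdot)$ over the $(N-k-1)$-dimensional patch $\bar I_\de(\nu)=\{(0,\mu_2):|\mu_2|=1,\ |\mu_2-\nu_2|<\de\}$ produces a set $Z_1\subset Z$ with $\mathcal{H}^{N-k-1}(Z_1)>0$, contradicting $\mathcal{H}^{N-k-1}(Z)=0$. Thus $\la_0(\nu)=\la_2(\nu)=0$ for every such $\nu$, which forces $u$ to be symmetric non-increasing in $z$ about $0$, and then $Z\subset\{z=0\}$ follows as in the remark after Theorem \ref{singular}.

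The delicate point is the Weak Comparison Principle for $z$-directions, since this is where the partial singularity $\{y=0\}$ meets the reflection. The essential simplification relative to the $y$-case is that a $z$-reflection preserves $|y|$ exactly, so no monotonicity of the weight $1/|y|^t$ is needed and the Hardy-Sobolev control of the singular term goes through unchanged; the only genuine bookkeeping difference is dimensional, the admissible directions now sweeping an $(N-k-1)$-dimensional sphere and the relevant Hausdorff measure being $\mathcal{H}^{N-k-1}$.
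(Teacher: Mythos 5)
Your reduction to the $y$-direction machinery is exactly the paper's own route \emph{when $N-k>1$}: the identity $|y^\nu_\la|=|y|$ for $\nu=(0,\nu_2)$ (the strengthened form of Lemma \ref{YL}), the resulting Weak Comparison Principle for $z$-directions, and Steps 1--3 all transfer as you describe, with the directions now sweeping an $(N-k-1)$-dimensional patch. But your Step 3 breaks down in the case $N-k=1$ (i.e.\ $k=N-1$), which is admitted by the standing hypotheses $2\leq k<N$ and is covered by the theorem. There the admissible directions $\nu_2$ run over $S^{N-k-1}=S^{0}=\{\pm 1\}$, a discrete set: for small $\de$ the patch $\bar I_\de(\nu)$ is just $\{\nu\}$, continuity of $\la_0(\cdot)$ carries no information, and the argument of \cite{DP}, pages 703--705 --- which manufactures the set $Z_1$ by varying the direction over a continuum --- produces nothing. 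Note also that in this case the hypothesis $\mathcal{H}^{0}(Z)=0$ says precisely $Z=\emptyset$, so a ``positive measure'' contradiction is not even the right target; one must instead derive a contradiction by other means.

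The paper closes this case with a different, more elementary argument, which your proposal is missing. If $N-k=1$, then $Z=\emptyset$ forces $Z^\nu_{\la_0(\nu)}=\emptyset$, so the conclusion of Step 1 (valid whenever $\la_0(\nu)<\la_2(\nu)$) gives $u\equiv u^\nu_{\la_0(\nu)}$ on a full connected component of the cap $\Ome^\nu_{\la_0(\nu)}$, hence by continuity on its closure. That closure meets $\pa\Ome$ at points off the hyperplane $T^\nu_{\la_0(\nu)}$; at such a point $u=0$, while $u^\nu_{\la_0(\nu)}>0$ because $\la_0(\nu)<\la_2(\nu)$ places the reflected point inside $\Ome$, where $u>0$. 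This contradiction yields $\la_0(\nu)=\la_2(\nu)$ and completes the case. So your proof is correct and coincides with the paper's for $N-k>1$, but you need to add this separate boundary-value argument (or some substitute) when $N-k=1$; as written, the claim that the only difference from Theorem \ref{singular} is ``dimensional bookkeeping'' fails precisely there.
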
 
Here if $N-k>1$ it goes back to the previous argument but if  $N-k=1$  by definition of Hausdroff measure we can say that $Z=\emptyset$. Now from Step 1 above
we can conclude $u=u_{\la_0(\nu)}^\nu$ in $\Omega_{\la_0(\nu)}^\nu$ and by continuity $u=u_{\la_0(\nu)}^\nu$ in $\bar\Omega_{\la_0(\nu)}^\nu$. Now to show $\la_0(\nu)=\la_2(\nu)$ let us observe that if $\la_0(\nu)<\la_2(\nu)$ it contradicts the fact $u=u_{\la_0(\nu)}^\nu$ in $\bar\Omega_{\la_0(\nu)}^\nu\cap\pa\Ome$ since we have $u>0$ in $\Ome$.

As an immediate consequence of the above theorems we have
\begin{cor} 
Let $\Ome$ be as above with $\pa\Ome$ orthogonal to the singular set and $\ell=\min\{k-1,N-k-1\}$. Then
\begin{displaymath}
\mathcal{H}^{\ell}(Z)=0\Rightarrow u\ \mbox{is cylindrically symmetric about $0$ and}\   Z=\{0\}.
\end{displaymath}
\end{cor}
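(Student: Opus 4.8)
The plan is to obtain the corollary as a direct consequence of Theorem~\ref{singular} and its companion theorem for the $z$-variable, the only genuinely analytic ingredient being the elementary monotonicity of Hausdorff measure in the dimension parameter. I would first record the standard fact that if $\mathcal{H}^{s}(E)=0$ and $s\le \sigma$ then $\mathcal{H}^{\sigma}(E)=0$: this is immediate for $\sigma=s$, and for $\sigma>s$ it follows because $\mathcal{H}^{s}(E)=0<\infty$ forces $\mathcal{H}^{\sigma}(E)=0$. Applying this with $E=Z$ and $s=\ell=\min\{k-1,\,N-k-1\}$, and using that by definition $\ell\le k-1$ and $\ell\le N-k-1$, the single hypothesis $\mathcal{H}^{\ell}(Z)=0$ yields simultaneously $\mathcal{H}^{k-1}(Z)=0$ and $\mathcal{H}^{N-k-1}(Z)=0$.

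Next I would feed these two vanishing statements into the two symmetry theorems. Since $\pa\Ome$ is orthogonal to the singular set and the reflection conditions $\la_1(\nu)=\la_2(\nu)=0$ are assumed (as part of ``$\Ome$ as above'') for both families of directions $\nu=(\nu_1,0)$ and $\nu=(0,\nu_2)$, Theorem~\ref{singular} applied with $\mathcal{H}^{k-1}(Z)=0$ gives that $u$ is symmetric in the variable $y$ about $0$ with $Z\subset\{y=0\}$, while the companion theorem applied with $\mathcal{H}^{N-k-1}(Z)=0$ gives that $u$ is symmetric in the variable $z$ about $0$ with $Z\subset\{z=0\}$. By the definition of cylindrical symmetry, symmetry in both $y$ and $z$ about $0$ means precisely that $u$ is cylindrically symmetric about $0$, which is the first assertion.

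Finally I would combine the two inclusions: $Z\subset\{y=0\}\cap\{z=0\}=\{0\}$, so $Z\subset\{0\}$. For the reverse inclusion, note that $0\in\Ome$ is the center of symmetry and that, $u$ being symmetric non-increasing in both $y$ and $z$ about $0$, the maximum of $u$ over $\bar\Ome$ is attained at the interior point $0$; since $u\in C^1(\bar\Ome)$ this forces $Du(0)=0$, i.e.\ $0\in Z$. Hence $Z=\{0\}$.

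I do not expect a substantial obstacle here, since the essential work was already carried out in establishing $\la_0(\nu)=\la_2(\nu)$ and the weak comparison principle of Theorem~\ref{WCP}; the corollary is bookkeeping on top of the two theorems. The one point I would be careful about is the borderline case $N-k=1$, where $\ell=0$ and $\mathcal{H}^{0}$ is counting measure: then the hypothesis $\mathcal{H}^{0}(Z)=0$ reads $Z=\emptyset$. This cannot occur, because $u>0$ in $\Ome$ vanishes on $\pa\Ome$ and is $C^1(\bar\Ome)$, so it attains a positive interior maximum at a point of $Z$, whence $Z\neq\emptyset$ unconditionally. Thus in this case the implication holds vacuously, and the statement is consistent.
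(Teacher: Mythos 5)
Your proposal is correct and takes essentially the same route as the paper, which presents the corollary as an immediate consequence of Theorem \ref{singular} and its $z$-direction analogue: monotonicity of Hausdorff measure in the dimension parameter gives both hypotheses $\mathcal{H}^{k-1}(Z)=0$ and $\mathcal{H}^{N-k-1}(Z)=0$ from the single assumption $\mathcal{H}^{\ell}(Z)=0$, and the two symmetry conclusions combine to give cylindrical symmetry with $Z\subset\{y=0\}\cap\{z=0\}=\{0\}$. Your added details --- that $0\in Z$ because the positive interior maximum of $u$ forces $Du(0)=0$, and that the borderline case $N-k=1$ is vacuous since $Z\neq\emptyset$ --- are correct fillings-in of points the paper leaves implicit.
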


\section{Appendix}

Let $x_0$ be a point on $\pa\Ome$. Since $\Ome$ is a smooth bounded domain, we can find a $C^2$ diffeomorphism $f:\R^{N-1}\to \R$ s.t. after re-labeling and re-orienting the coordinate axes (if necessary) we have 
\begin{displaymath}
 \Ome_1=\Ome\cap B(x_0,r)=\{x\in B(x_0,r):x_n>f(x_1,\ldots,x_{n-1})\}
\end{displaymath}
Now let us define, $\eta:\Ome_1\to B_1^+ $ be corresponding deffiomorphism where $B_1^+=\eta(\Ome_1)$. And $\va(x)=\eta^{-1}(x)$. If $u$ satisfies the following equation
\begin{displaymath}
-div|\na u|^{p-2}\na u=g(x,u)
\end{displaymath}
in weak sense, then after the change of variable defined above with $v(x)=u(\va(x))$ where $v:B^+_1\to \R$, the above equation changes to
\begin{displaymath}
 A(x,\na v)=|\na v\cdot B(x)|^{p-2}\na v.B(x)B(x)^T|\mbox{det} B(x)^{-1}|
\end{displaymath}
where $B(x)=D\eta(x)$. Therefore denoting $\na v$ as $\xi$ we get
\begin{displaymath}
 A(x,\xi)=|\xi\cdot B(x)|^{p-2}\xi.B(x)B(x)^T|\mbox{det} B(x)^{-1}|
\end{displaymath}
We claim that $A(x,\xi)$ satisfies (\ref{qp1})-(\ref{qp3}).

\spa

Proof of (\ref{qp1}): We can see that
\begin{eqnarray*}
 &&\big(A(x, \xi_1)-A(x, \xi_2)\big)(\xi_1-\xi_2)\\
&&\geq \big(|\xi_1 B(x)|^{p-2}\xi_1 B(x)-|\xi_2
 B(x)|^{p-2}\xi_2 B(x)\big)(\xi_1 B(x)-\xi_2 B(x))|\mbox{det} B(x)^{-1}|\\
&&\geq C_1(1+|\xi_1|^2+|\xi_2|^2)^\frac{p-2}{2}|\xi_1 B(x)-\xi_2 B(x)|^2.
\end{eqnarray*}
Again
\begin{eqnarray*}
 |\xi_1 B(x)-\xi_2 B(x)|^2 &=&\langle \xi_1.B(x)B(x)^T-\xi_2.B(x)B(x)^T, \xi_1-\xi_2\rangle\\
&\geq& C_2 |\xi_1-\xi_2|^2
\end{eqnarray*}
Since $B(x)B(x)^T$ is a positive definite matrix. Hence (\ref{qp1}) follows.

\spa

Proof of (\ref{qp2}) is obvious. 

Proof of (\ref{qp3}):\begin{eqnarray*}
 A_i(x,\xi)-A_i(y,\xi)&=&|\xi B(x)|^{p-2}\big[(\xi B(x)B(x)^T)_i-(\xi
 B(y)B(y)^T)_i\big]|\mbox{det} B(x)^{-1}|\no
\\
&+&(\xi B(y)B(y)^T)_i|\xi B(x)|^{p-2}|\mbox{det} B(x)^{-1}|\no
\\
&-&(\xi B(y)B(y)^T)_i|\xi B(y)|^{p-2}|\mbox{det} B(y)^{-1}|\no
\\
&\leq& C|x-y||\xi|^{p-1}+C|\xi|\big[|\xi B(x)|^{p-2}-|\xi B(y)|^{p-2}\big]|\mbox{det} B(x)^{-1}|\no
\\
&+& (\xi B(y)B(y)^T)_i|\mbox{det} B(x)^{-1}||\xi B(y)|^{p-2}\no
\\
&-&(\xi B(y)B(y)^T)_i|\mbox{det} B(y)^{-1}||\xi B(y)|^{p-2}
\end{eqnarray*}
Since \begin{displaymath}
 |\xi B(x)|^{p-2}-|\xi B(y)|^{p-2}\leq C|\xi|^{p-2}|x-y|
\end{displaymath}
 and $\eta$ is a $C^2$ function in  a bounded domain implies $|\mbox{det} B(y)^{-1}|$ is a Lipschitz function. Therefore result follows.\hfill $\Box$

\noi {\bf Acknowledgement:}\ This paper is a part of first author's doctoral dissertation.
The first author would like to thank her adviser K. Sandeep for various useful discussion.

\end{document}